\newtheorem{theorem}{Theorem}[section]
\newtheorem{lemma}[theorem]{Lemma}
\theoremstyle{definition}
\newtheorem{remark}[theorem]{Remark}
\numberwithin{equation}{section}
\begin{document}

\title[Asymptotic uniformity of the quantization error for Moran measures]{Asymptotic
uniformity of the quantization error for Moran measures on $\mathbb{R}^1$}
\thanks{The author is supported by National Natural Science Foundation of China No. 11571144.}
\author{Sanguo Zhu}
\address{School of Mathematics and Physics, Jiangsu University
of Technology\\ Changzhou 213001, China.}
\email{sgzhu@jsut.edu.cn}

\begin{abstract}
Let $E$ be a Moran set on $\mathbb{R}^1$ associated with a closed interval $J$ and two sequences $(n_k)_{k=1}^\infty$ and $(\mathcal{C}_k=(c_{k,j})_{j=1}^{n_k})_{k\geq1}$. Let $\mu$ be the infinite product measure (Moran measure) on $E$ associated with a sequence $(\mathcal{P}_k)_{k\geq1}$ of positive probability vectors with $\mathcal{P}_k=(p_{k,j})_{j=1}^{n_k},k\geq 1$. We assume that
\[
\inf_{k\geq1}\min_{1\leq j\leq n_k}c_{k,j}>0,\;\inf_{k\geq1}\min_{1\leq j\leq n_k}p_{k,j}>0.
\]
For every $n\geq 1$, let $\alpha_n$ be an $n$ optimal set in the quantization for $\mu$ of order $r\in(0,\infty)$ and $\{P_a(\alpha_n)\}_{a\in\alpha_n}$ an arbitrary Voronoi partition with respect to $\alpha_n$. For every $a\in\alpha_n$, we write $I_a(\alpha,\mu):=\int_{P_a(\alpha_n)}d(x,\alpha_n)^rd\mu(x)$ and
\[
\underline{J}(\alpha_n,\mu):=\min_{a\in\alpha_n}I_a(\alpha,\mu),\; \overline{J}(\alpha_n,\mu):=\max_{a\in\alpha_n}I_a(\alpha,\mu).
\]
We show that $\underline{J}(\alpha_n,\mu),\overline{J}(\alpha_n,\mu)$ and $e^r_{n,r}(\mu)-e^r_{n+1,r}(\mu)$ are of the same order as $\frac{1}{n}e^r_{n,r}(\mu)$, where $e^r_{n,r}(\mu):=\int d(x,\alpha_n)^rd\mu(x)$ is the $n$th quantization error for $\mu$ of order $r$. In particular, for the class of Moran measures on $\mathbb{R}^1$, our result shows that a weaker version of Gersho's conjecture holds.
\end{abstract}

\keywords {Moran sets, Moran measures, quantization, Voronoi partition, asymptotic
uniformity}
\subjclass[2000]{Primary 28A75, 28A78; Secondary 94A15}
\maketitle
\section{Introduction}
The quantization problem for probability measures has a deep background in information theory and engineering technology such as signal processing and data compression \cite{BW:82,GN:98,Za:63}. Mathematically, this problem consists in the approximation of a given probability measure with discrete probability measures of finite support in the sense of $L_r$-metrics. Asymptotics of the minimum error (quantization error) in the above approximation have been one of the main goals in the study of the quantization problem. In this paper, we study the asymptotic uniformity of the quantization error for Moran measures on the real line. For this class of measures, we will show that, a weaker version of Gersho's conjecture holds. We refer to \cite{GL:00} for the rigorous mathematical foundations for quantization theory and \cite{GPM:02,PG:97} for some significant applications of this theory.
\subsection{Basic definitions and some known results}
Let $\nu$ be a Borel probability measure on $\mathbb{R}^q$. For $x,y\in\mathbb{R}^q$, we denote by $d(x,y)$ the distance between $x$ and $y$ induced by a norm $|\cdot|$ on $\mathbb{R}^q$, and for a subset $\alpha$ of $\mathbb{R}^q$, let $d(x,\alpha):=\inf_{a\in\alpha}d(x,a)$.  Set $\mathcal{D}_{n}:=\{\alpha\subset\mathbb{R}^{q}:1\leq{\rm card}(\alpha)\leq n\}, n\geq 1$. The $n$th quantization error for $\nu$ of order $r\in(0,\infty)$ is given by
\begin{eqnarray}\label{quanerror}
e_{n,r}(\nu):=\bigg(\inf_{\alpha\in\mathcal{D}_{n}}\int d(x,\alpha)^{r}d\nu(x)\bigg)^{1/r}.
\end{eqnarray}
By \cite{GL:00}, $e_{n,r}(\nu)$ equals the minimum error in the approximation of $\nu$ with probability measures supported on at most $n$ points in $L_r$-metrics.

When the infimum in (\ref{quanerror}) is attained at some $\alpha\in\mathcal{D}_n$, we call this $\alpha$ an $n$-optimal set for $\nu$ of order $r$. The set of all $n$-optimal sets for $\nu$ of order $r$ is denoted by $C_{n,r}(\nu)$. It is known \cite{GL:00} that $C_{n,r}(\nu)$ is non-empty whenever the moment condition $\int |x|^rd\nu(x)<\infty$ is satisfied. In particular, for probability measures $\nu$ with compact support, we always have $C_{n,r}(\nu)\neq\emptyset$.

 In the past years, asymptotic properties for the $n$th quantization error, including the upper (lower) quantization dimension and the upper (lower) quantization coefficient, have been intensively studied. Recall that the upper quantization dimension $\overline{D}_{r}(\nu)$ and the lower one $\underline{D}_{r}(\nu)$ for $\nu$ of order $r$ are defined by
\begin{equation*}
\overline{D}_{r}(\nu):=\limsup_{n\to\infty}\frac{\log n}{-\log
e_{n,r}(\nu)};\;\;\underline{D}_{r}(\nu):=\liminf_{n\to\infty}\frac{\log
n}{-\log e_{n,r}(\nu)};
\end{equation*}
and for $s\in(0,\infty)$, the $s$-dimensional upper quantization coefficient $\overline{Q}_r^s(\nu)$ for $\nu$ of order $r$ and the lower one are given by
\[
\overline{Q}_r^{s}(\nu):=\limsup_{n\to\infty}n^{\frac{r}{s}}e^r_{n,r}(\nu),\;
\underline{Q}_r^{s}(\nu):=\liminf_{n\to\infty}n^{\frac{r}{s}}e^r_{n,r}(\nu).
\]
By \cite{GL:00,PK:01}, the upper (lower) quantization dimension is exactly the critical point at which the upper (lower) quantization coefficient jumps from infinity to zero. One may see \cite{GL:00} and \cite{PK:01} for the connections between the upper (lower) quantization dimension (coefficient) and some important objects in fractal geometry.

In \cite{GL:01}, Graf and Luschgy determined the asymptotics for the quantization error for self-similar measures on $\mathbb{R}^q$ with the assumption of the open set condition for the corresponding iterated function system. Let $(f_i)_{i=1}^N$ be a family of contractive similitudes on $\mathbb{R}^q$. By \cite{Hut:81}, there exists a unique non-empty compact set $E$ satisfying $E=\bigcup_{i=1}^Nf_i(E)$.
The set $E$ is referred to as the self-similar set associated with $(f_i)_{i=1}^N$. Given a probability vector $(p_i)_{i=1}^N$ with $p_i>0$ for each $i$, there exists a unique Borel probability measure $P$ supported by $E$ such that
$P=\sum_{i=1}^Np_iP\circ f_i^{-1}$. The measure $P$ is called the self-similar measure associated with $(f_i)_{i=1}^N$ and $(p_i)_{i=1}^N$.
We say that $(f_i)_{i=1}^N$ satisfies the strong separation condition (SSC) if $f_i(E), 1\leq i\leq N$, are pairwise disjoint.
We say that it satisfies the open set condition (OSC) if there exists a bounded non-empty open set $U$ such that $\bigcup_{i=1}^Nf_i(U)\subset U$ and $f_i(U),1\leq i\leq N$, are pairwise disjoint. Let $t_i$ be the contraction ratio of $f_i, 1\leq i\leq N$, and $s_r$ the unique solution of the equation \[
\sum_{i=1}^N(p_is_i^r)^{\frac{s_r}{s_r+r}}=1.
\]
Assuming the OSC for $(f_i)_{i=1}^N$, Graf and Luschgy proved that
\[
\overline{D}_{r}(P)=\underline{D}_{r}(P)=s_r,\;0<\overline{Q}_r^{s}(P)\leq\overline{Q}_r^{s}(P)<\infty.
\]

The above result has a significant influence on later study of the quantization problem, especially for those singular measures supported on fractals. One may also see \cite{GL:00,GL:01,GL:04,GL:08,KZ:16,LM:02,MR:15,PK:01,Zhu:18} for more related work in this direction.

\subsection{Asymptotic uniformity of the quantization error}
A significant concern in quantization theory is, how much contribution each point of an $n$-optimal set make to the $n$th quantization error. This is closely connected with a famous conjecture of Gersho \cite{Ger:79}. In the study of this concern, Voronoi partitions play a crucial role. Recall that a Voronoi partition with respect to a finite set $\alpha\subset\mathbb{R}^q$ means a
partition $\{P_a(\alpha)\}_{a\in\alpha}$ of $\mathbb{R}^q$
satisfying
\begin{eqnarray*}
\{x\in\mathbb{R}^q:\;{\rm d}(x,a)<{\rm
d}\big(x,\alpha\setminus\{a\}\big)\big\}\subset P_a(\alpha)\subset
\big\{x\in\mathbb{R}^q:\;{\rm d}(x,a)={\rm d}(x,\alpha)\big\}.
\end{eqnarray*}
For the above $\alpha\subset\mathbb{R}^q$ and a Borel probability measure $\nu$, we write
\begin{eqnarray*}\label{integral}
&&I(\alpha,\nu):=\int d(x,\alpha)^rd\nu(x),\;\;I_a(\alpha,\nu):=\int_{P_a(\alpha)}d(x,\alpha)^rd\nu(x),\;a\in\alpha;\\
&&\underline{J}(\alpha,\nu):=\min_{a\in\alpha}I_a(\alpha,\nu),\; \overline{J}(\alpha,\nu):=\max_{a\in\alpha}I_a(\alpha,\nu).
\end{eqnarray*}
For each $n\geq 1$, let $\alpha_n$ be an arbitrary $n$-optimal set for $\nu$ of order $r$. A well-known conjecture of Gersho \cite{Ger:79} suggests that
\begin{equation}\label{gersho}
\underline{J}(\alpha_n,\nu),\overline{J}(\alpha_n,\nu)\sim\frac{1}{n}e^r_{n,r}(\nu),
\end{equation}
 where $a_n\sim b_n$ means that $a_n/b_n\to 1$ as $n$ tends to infinity. This conjecture suggests a kind of uniformity:  asymptotically, points of an $n$-optimal set make equal contributions to the $n$th quantization error. Up to now, it has been proved true only for some particular classes of one-dimensional distributions (cf. \cite{GL:12,PG:97}). Recently, Graf, Luschgy and Pag\`{e}s proved in \cite{GL:12} that, for a large class of absolutely continuous measures $\nu$, a weaker version of (\ref{gersho}) holds:
 \begin{equation}\label{weaker}
 \underline{J}(\alpha_n,\nu),\overline{J}(\alpha_n,\nu)\asymp\frac{1}{n}e^r_{n,r}(\nu),
 \end{equation}
 where, for two $\mathbb{R}$-valued variables $X,Y$, $X\asymp Y$ means that there exists a constant $D>0$, such that $DX\leq Y\leq D^{-1}X$. One may see \cite{Kr:13} for some significant results on the above conjecture.

 Assuming the SSC, the author \cite{Zhu:09} established (\ref{weaker}) for self-similar measures on $\mathbb{R}^q$. Without the SSC, it turns out to be rather difficult to examine whether (\ref{weaker}) holds or not. The main obstacle lies in the characterizations for Voronoi partitions with respect to $n$-optimal sets. Due to the lack of "gaps" among cylinder sets, the three-step procedure by means of partitioning, covering and packing, as described in \cite{KZ:15}, is hardly applicable.

 In the present paper, we will prove (\ref{weaker}) for a class of Moran measures on the real line, allowing adjacent cylinder sets touching one another. The advantage lies in the fact that, a bounded interval can always be excluded from its complement by its endpoints. When we add the two endpoints to the quantizing set $\alpha\in\mathcal{D}_n$ and adjust the prospective optimal points inside the interval, its complement would not be affected unfavourably. In spite of this advantage, much work and some new techniques are required for the proof of our main result.

\subsection{Statement of the main result}

 Let $(n_k)_{k=1}^\infty$ be a sequence of integers with $n_k\geq 2$. For $k\geq 1$, let $\mathcal{S}_k=(c_{k,j})_{j=1}^{n_k}$, be a finite sequence of numbers such that
 \[
 \min_{1\leq j\leq n_k}c_{k,j}>0,\;c_{k,1}\cdots+c_{k,n_k}\leq 1.
 \]
We denote by $\theta$ the empty word and set $\Omega_0:=\{\theta\}$. Write
\[
\Omega_k:=\{1,\ldots, n_k\},\;\Omega^*:=\bigcup_{k=0}^\infty\Omega_k,\;\Omega_\infty:=\prod_{k=1}^\infty\Omega_k.
\]
For $\sigma\in\Omega_k$, let $|\sigma|:=k$. For $\sigma=(\sigma_1,\ldots\sigma_{|\sigma|})\in\Omega^*$, we write
\[
\sigma\ast j:=(\sigma_1,\ldots\sigma_{|\sigma|},j),\;1\leq j\leq n_{|\sigma|+1}.
\]
For a word $\sigma=(\sigma_1,\ldots,\sigma_k)\in\Omega_k$ with $1\leq h\leq k$ , we write
\[
|\sigma|=k;\;\sigma|_h=(\sigma_1,\ldots,\sigma_h).
 \]
For $\sigma\in\Omega_\infty$, we set $|\sigma|=\infty$ and define $\sigma|_h$ in the same way as for words in $\Omega^*$.

 Let $J_\theta:=J$ be a closed interval of finite length. We denote by $|A|$ the diameter of a set $A$. For every $k\geq 0$ and $\sigma\in\Omega_k$, let $J_{\sigma\ast j}, 1\leq j\leq n_{k+1}$, be non-overlapping subintervals of $J_\sigma$ with $|J_{\sigma\ast j}|/|J_\sigma|=c_{k+1,j}$ for $1\leq j\leq n_{k+1}$.
 Write
 \begin{equation}\label{moranset}
 E:=\bigcap_{k\geq 1}\bigcup_{\sigma\in\Omega_k}J_\sigma.
 \end{equation}
We call the set $E$ a Moran set associated with $J,(n_k)_{k\geq 1}$ and $(c_{k,j})_{j=1}^{n_k}$. For related results on this type of sets, we refer to \cite{Mu:93,Moran:46,Wen:00} and the references therein.

For each $\sigma\in\Omega^*$, we call $J_\sigma$ a cylinder set of order $|\sigma|$. Without loss of generality, in the following, we assume that $|J|$=1 and set $c_\theta:=1$. Then we have
\begin{eqnarray}\label{productratio}
|J_\sigma|=c_\sigma:=c_{1,\sigma_1}\cdots c_{k,\sigma_k},\;{\rm for}\;\sigma=(\sigma_1,\ldots,\sigma_k)\in\Omega_k,\;k\geq 1.
\end{eqnarray}

 Now let $\Omega_k,k\geq 1$, be endowed with discrete topology and $\Omega_\infty$ be endowed with the corresponding product topology. For every $k\geq 1$, let $(p_{k,j})_{j=1}^{n_k}$ be a positive probability vector and set $p_\theta:=1$. By Kolmogorov consistency theorem, there exists a unique Bore probability measure $\nu$ on $\Omega_\infty$ such that
 \[
\nu([\sigma])=p_\sigma:=p_{1,\sigma_1}p_{2,\sigma_2}\cdots p_{|\sigma|,\sigma_{|\sigma|}},\;{\rm for}\;\sigma=(\sigma_1,\ldots\sigma_{|\sigma|})\in\Omega^*,
 \]
 where $[\sigma]:=\{\tau\in\Omega_\infty:\tau|_{|\sigma|}=\sigma\}$. Let $\pi:\Omega_\infty\to E$ be the natural projection:
 \[
 \pi(\tau):=\bigcap_{k\geq 1}J_{\tau|_k},\;\tau=(\tau_1,\ldots,\tau_k,\ldots)\in\Omega_\infty.
 \]
 It might happen that, for some (at most countably many) points $x\in E$, there correspond two words $\tau,\rho\in\Omega_\infty$ such that $\pi(\tau)=\pi(\omega)=x$. However, one can easily see that the measure $\nu$ is non-atomic, which implies that, for the image measure $\mu:=\nu\circ\pi^{-1}$, we have
 \begin{equation}\label{moranmeasure}
 \mu(J_\sigma)=\nu\circ\pi^{-1}(J_\sigma)=\nu([\sigma])=p_\sigma,\;{\rm for}\;\sigma=(\sigma_1,\ldots\sigma_{|\sigma|})\in\Omega^*.
 \end{equation}
We call the measure $\mu$ a Moran measure on $E$. One may see \cite{Mu:93,Wen:00,Wuli:11} for related results on this type of measures.
 In the present paper, we will prove
\begin{theorem}\label{mthm}
Let $E$ be a Moran set as defined in (\ref{moranset}). Let $\mu$ be the Moran measure satisfying (\ref{moranmeasure}). Assume that
\begin{equation}\label{assume}
\inf_{k\geq1}\min_{1\leq j\leq n_k}c_{k,j}>0,\;\inf_{k\geq1}\min_{1\leq j\leq n_k}p_{k,j}>0.
\end{equation}
 For $n\geq 1$, let $\alpha_n$ be an arbitrary $n$-optimal set for $\mu$ of order $r$ and $\{P_a(\alpha_n)\}_{a\in\alpha_n}$ an arbitrary Voronoi partition with respect to $\alpha_n$. Then we have
\[
\underline{J}(\alpha_n,\mu),\;\overline{J}(\alpha_n,\mu),\;e^r_{n,r}(\mu)-e^r_{n+1,r}(\mu)\asymp\frac{1}{n}e^r_{n,r}(\mu).
\]
\end{theorem}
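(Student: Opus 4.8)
Since $\sum_{a\in\alpha_n}I_a(\alpha_n,\mu)=e^r_{n,r}(\mu)$ and $\operatorname{card}\alpha_n=n$, one trivially has $\underline J(\alpha_n,\mu)\le\frac1n e^r_{n,r}(\mu)\le\overline J(\alpha_n,\mu)$, and $e^r_{n,r}(\mu)-e^r_{n+1,r}(\mu)>0$ because $\mu$ is non-atomic with infinite support. So the plan is to produce a constant $D>0$, independent of $n$, with
\[
\underline J(\alpha_n,\mu)\ge\tfrac Dn\,e^r_{n,r}(\mu),\qquad \overline J(\alpha_n,\mu)\le\tfrac1{Dn}\,e^r_{n,r}(\mu),\qquad \tfrac Dn\,e^r_{n,r}(\mu)\le e^r_{n,r}(\mu)-e^r_{n+1,r}(\mu)\le\tfrac1{Dn}\,e^r_{n,r}(\mu).
\]
First I would extract the consequences of (\ref{assume}): setting $\underline c:=\inf_k\min_j c_{k,j}>0$ and $\underline p:=\inf_k\min_j p_{k,j}>0$, one also has $c_{k,j}\le1-\underline c$ and $p_{k,j}\le1-\underline p$ for all $k,j$ (there are $n_k\ge2$ summands), so every ratio $c_{\sigma\ast j}/c_\sigma$ and $p_{\sigma\ast j}/p_\sigma$ lies in a fixed subinterval of $(0,1)$. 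This uniform quasi-self-similarity is the source of all later uniformity in the constants. From it I would establish, by the standard covering/packing bounds for the quantization error, two preparatory facts: (a) the normalized rescalings $\mu_\sigma$ of $\mu|_{J_\sigma}$ (push $\mu|_{J_\sigma}$ affinely onto $J$, then normalize; this is again a Moran measure, built from the tail sequences) are uniformly quantizable, i.e.\ $e^r_{m,r}(\mu_\sigma)\asymp e^r_{m,r}(\mu_{\sigma'})$ with constants free of $\sigma,\sigma',m$; and (b) a regularity estimate $\rho_1\,e^r_{2n,r}(\mu)\le e^r_{n,r}(\mu)\le\rho_2\,e^r_{2n,r}(\mu)$ with $1<\rho_1\le\rho_2<\infty$ (so in particular $0<\underline D_r(\mu)\le\overline D_r(\mu)<\infty$).

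The structural device is a decomposition along antichains. For a finite antichain $\Gamma\subset\Omega^*$, adjoining to a quantizing set the (at most $2\operatorname{card}\Gamma$) endpoints of the intervals $J_\sigma$, $\sigma\in\Gamma$, decouples the quantization of $\mu$ into the quantization of $\mu|_{J_\sigma}$ over $\sigma\in\Gamma$: since each $J_\sigma$ is an interval, once its endpoints belong to the set, the mass it carries is served only by points of $J_\sigma$ and the remaining mass is untouched. Combined with (a)--(b), this yields, for any positive integers $(m_\sigma)_{\sigma\in\Gamma}$ with $\sum_\sigma m_\sigma\le n$,
\[
e^r_{n,r}(\mu)\;\asymp\;\min_{(m_\sigma)}\;\sum_{\sigma\in\Gamma}p_\sigma c_\sigma^{\,r}\,e^r_{m_\sigma,r}(\mu_\sigma)\;+\;(\text{terms of smaller order}),
\]
the constants depending only on $\underline c,\underline p,r$. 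I would then fix for each $n$ an antichain $\Gamma_n$ with $\operatorname{card}\Gamma_n\asymp n$ and $p_\sigma c_\sigma^{\,r}$ comparable across $\sigma\in\Gamma_n$ (possible by the ratio bounds); each $J_\sigma$, $\sigma\in\Gamma_n$, then receives a bounded number of points of any $n$-optimal set and contributes $\asymp\frac1n e^r_{n,r}(\mu)$. A companion deletion lemma is then available: once the endpoints of $\Gamma_n$ have been adjoined, deleting any interior point of a cylinder $J_\sigma$, $\sigma\in\Gamma_n$, costs at most $\asymp p_\sigma c_\sigma^{\,r}\asymp\frac1n e^r_{n,r}(\mu)$, since the freed mass stays inside $J_\sigma$ and can be re-served there by the two endpoints and the remaining interior points.

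The three remaining inequalities follow by perturbing an optimal set and invoking its optimality. (i) Writing $a^\ast$ for a point of $\alpha_n$ with $I_{a^\ast}(\alpha_n,\mu)=\overline J(\alpha_n,\mu)$, one shows a single well-chosen point $b\in P_{a^\ast}(\alpha_n)$ reduces that cell's contribution by a fixed fraction $\kappa>0$ — if $\mu$ restricted to $P_{a^\ast}(\alpha_n)$ were too concentrated near $a^\ast$ then $I_{a^\ast}$ would itself be small, otherwise $b$ is placed past a suitable quantile, using the uniform non-degeneracy from (a) — whence $e^r_{n,r}(\mu)-e^r_{n+1,r}(\mu)\ge\kappa\,\overline J(\alpha_n,\mu)\ge\kappa\,\frac1n e^r_{n,r}(\mu)$, the lower half of the difference estimate. (ii) For the upper bound on $\overline J$: if $\overline J(\alpha_n,\mu)$ exceeded $\frac{D_2}{n}e^r_{n,r}(\mu)$ with $D_2$ large, then deleting one point near a cheap cylinder of $\Gamma_n$ (deletion lemma, cost $\lesssim\frac1n e^r_{n,r}(\mu)$) and spending the freed point in $P_{a^\ast}(\alpha_n)$ (item (i), gain $\ge\kappa\,\overline J(\alpha_n,\mu)$), while a counting argument keeps the total cardinality equal to $n$, would contradict optimality of $\alpha_n$. (iii) Likewise, the lower bound on $\underline J$ comes from deleting a least-contributing point of $\alpha_n$ and re-adding it to $P_{a^\ast}(\alpha_n)$: the net change is $\le O(\frac1n e^r_{n,r}(\mu))-\kappa\,\overline J(\alpha_n,\mu)$, so optimality forces $\underline J(\alpha_n,\mu)\ge\frac{D_1}{n}e^r_{n,r}(\mu)$; and the remaining bound $e^r_{n,r}(\mu)-e^r_{n+1,r}(\mu)\le\frac{D_3}{n}e^r_{n,r}(\mu)$ follows by taking an $(n+1)$-optimal set, deleting its least-contributing point (deletion lemma relative to $\Gamma_{n+1}$), and using $e^r_{n+1,r}(\mu)\le e^r_{n,r}(\mu)$.

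The main obstacle is precisely the coupling between cylinders that touch: deleting or moving a quantizing point near a cylinder boundary could, a priori, force a costly redistribution of mass across that boundary, wrecking every one of the contradiction arguments. Keeping each such modification at the order $\frac1n e^r_{n,r}(\mu)$ is exactly what the endpoints device secures, its $O(n)$-point overhead being absorbed into the constants; but making this quantitative — establishing the uniform quantizability (a), the regularity (b), the deletion lemma, and controlling how many cylinders of $\Gamma_n$ a single Voronoi cell can meet, together with the cardinality bookkeeping in (ii) — is the technical core of the proof.
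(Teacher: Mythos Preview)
Your overall scaffolding --- choose a maximal antichain $\Gamma_n$ with ${\rm card}(\Gamma_n)\asymp n$ and $p_\sigma c_\sigma^r$ mutually comparable across $\sigma\in\Gamma_n$, adjoin endpoints to decouple cylinders, then show each $J_\sigma$ receives a bounded number of points of $\alpha_n$ --- matches the paper's architecture (their $\Lambda_{k,r}$, Lemmas~3.2--3.4). The upper bound on $\overline J$ and both bounds on $e^r_{n,r}-e^r_{n+1,r}$ can indeed be obtained by the kind of add/delete perturbations you describe, and the paper does essentially this.

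The genuine gap is your step (iii), the lower bound $\underline J(\alpha_n,\mu)\gtrsim\frac1n e^r_{n,r}(\mu)$. As written, deleting the least-contributing point $a_0$ and re-inserting it in $P_{a^\ast}$ yields, via optimality, only
\[
[\text{cost of deleting }a_0]\ \ge\ \kappa\,\overline J(\alpha_n,\mu),
\]
and since your deletion lemma bounds the left side by $O(\tfrac1n e^r_{n,r}(\mu))$, this just reproduces (ii). Nothing here controls $\underline J$: the deletion cost is $\int_{P_{a_0}}\bigl(d(x,\alpha_n\setminus\{a_0\})^r-d(x,a_0)^r\bigr)d\mu$, which is \emph{not} comparable to $I_{a_0}=\underline J$ in general --- if the mass in $P_{a_0}$ sits very close to $a_0$, then $I_{a_0}$ can be arbitrarily small while the deletion cost stays of order $\mu(P_{a_0})\,|P_{a_0}|^r$. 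So no contradiction with optimality is produced when $\underline J$ is tiny, and the argument does not close.

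The paper handles this lower bound by an essentially different mechanism. For each $a\in\alpha_n$ it builds a set $G_a$ that is a union of at most $2M_3$ Voronoi cells $P_b(\alpha_n)$, and invokes Theorem~4.1 of \cite{GL:00}: the subset $\alpha_n\cap G_a$ is itself \emph{optimal} for the conditional measure $\mu(\cdot\,|\,G_a)$. Pushing this conditional measure forward by a similitude of ratio $c_\sigma$ to a measure $\lambda_a$ with uniform local dimension bound (their Lemmas~4.2--4.5), the optimality of $\beta_a:=g_\sigma^{-1}(\alpha_n\cap G_a)$ together with a variant of \cite[Proposition~12.12]{GL:00} forces each cell $P_b(\beta_a)$ to carry a $\lambda_a$-mass bounded below by a constant depending only on $H_a={\rm card}(\beta_a)\le 2M_3$, whence $I_a(\alpha_n,\mu)\gtrsim\mathcal E(\sigma)\asymp\tfrac1n e^r_{n,r}(\mu)$ directly (Lemma~4.6 and the final proof). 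The delicate case is $S_a=2$, where $P_a(\alpha_n)$ meets two adjacent cylinders $J_\sigma,J_\tau$ with possibly wildly different $c_\sigma,c_\tau$; the paper spends Lemmas~4.3--4.6 separating $K_{a,\sigma}$ and $K_{a,\tau}$ and tracking the scaling. Your plan would need either this ``local optimality $\Rightarrow$ uniform mass lower bound'' ingredient, or a replacement that actually ties the deletion cost of $a_0$ to $I_{a_0}$; the perturbation scheme alone does not supply it.
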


For the proof of Theorem \ref{mthm}, we will consider some auxiliary measures $\nu_\sigma$ by pushing forward and pulling back the conditional measures of $\mu$ on the cylinder sets $J_\sigma, \sigma\in\Omega^*$. We will establish some properties which hold true uniformly for all $\nu_\sigma,\sigma\in\Omega^*$. These properties will enable us to estimate the number of points of $\alpha_n\in C_{n,r}$ which lies in $J_\sigma$, so that the geometrical size of $P_a(\alpha_n),a\in\alpha_n$, can be addressed. Finally, Theorem 4.1 of \cite{GL:04}, which says that a subset $\beta$ of an $n$-optimal set for $\mu$ is also ${\rm card}(\beta)$-optimal for the conditional measure $\mu(\cdot|\bigcup_{b\in\beta}P_b(\alpha_n))$, will play an important role in our lower estimation for $\underline{J}(\alpha_n,\mu)$. This result, together with the above-mentioned auxiliary measures will enable us to reduce the global optimization problem  for $\mu$ with respect to an arbitrarily large $n$ to local ones for conditional measures $\mu(\cdot|J_\sigma)$ with respect to some bounded numbers.

\section{Preliminaries}

For $\sigma\in\Omega^*$, we write $\sigma^-:=\sigma|_{|\sigma|-1}$ if $|\sigma|>1$ and $\sigma^-=\theta$ if $|\sigma|=1$. We write $\sigma\prec\omega$ if $|\sigma|\leq|\omega|$ and $\sigma=\omega|_{|\sigma|}$. Two words $\sigma,\omega\in\Omega^*$ are called incomparable if neither $\sigma\prec\omega$ nor $\omega\prec\sigma$. A finite subset $\Gamma$ of $\Omega^*$ is called a finite antichain if words in $\Gamma$ are pairwise incomparable; such a $\Gamma$ is said to be a finite maximal antichain if for every $\tau\in\Omega_\infty$, there exists a word $\sigma\in\Gamma$ such that $\sigma\prec\tau$.
We write
\[
\mathcal{E}(\sigma):=p_\sigma s_\sigma^r,\;\sigma\in\Omega^*.
\]

For $k\geq1$, let $(p_{k,j})_{j=1}^{n_k}$ and $(c_{k,j})_{j=1}^{n_k}$, be as given in Theorem \ref{mthm}. We write
\begin{eqnarray*}
&&\underline{p}:=\inf_{k\geq 1}\min_{1\leq j\leq n_k}p_{k,j},\;\;\overline{p}:=\sup_{k\geq 1}\max_{1\leq j\leq n_k}p_{k,j};\\&&\underline{c}:=\inf_{k\geq 1}\min_{1\leq j\leq n_k}c_{k,j},\;\;\overline{c}:=\sup_{k\geq 1}\max_{1\leq j\leq n_k}c_{k,j}.
\end{eqnarray*}
By the assumption (\ref{assume}), we have $\underline{p}>0$ and $\underline{c}>0$.
We define
\[
\eta_r:=\underline{p}\underline{c}^r,\;\Lambda_{k,r}:=\{\sigma\in\Omega^*:\mathcal{E}(\sigma^-)\geq\eta_r^k>\mathcal{E}(\sigma)\},\;\phi_{k,r}:={\rm card}(\Lambda_{k,r}).
\]
One can see that for every $k\geq 1$, $\Lambda_{k,r}$ is a finite maximal antichain. This type of construction was first used in \cite{GL:00} to treat the quantization problem for self-similar measures. The spirit is to seek some kind of uniformity while general probability measures are not uniform. Indeed, for every pair $\sigma,\tau\in\Lambda_{k,r}$, we have
\begin{eqnarray}\label{lambdakrcompare}
\eta_r\mathcal{E}(\tau)\leq\mathcal{E}(\sigma)\leq\eta_r^{-1}\mathcal{E}(\tau),\;\;{\rm implying}\;\;\mathcal{E}(\sigma)\asymp\mathcal{E}(\tau).
\end{eqnarray}
Using the assumption (\ref{assume}) and the arguments in the proof for \cite[Lemma 3.1]{Zhu:09}, one can see that, there exists an integer $M_0$ such that
\begin{equation}\label{lambdacard}
\phi_{k,r}\leq\phi_{k+1,r}\leq M_0\phi_{k,r}.
\end{equation}

\begin{remark}\label{crucial}
Let $\alpha_n\in C_{n,r}(\mu)$ and $\{P_a(\alpha_n)\}_{a\in\alpha_n}$ a Voronoi partition with respect to $\alpha_n$. The following two quantities will be crucial for the characterizations of the geometric structure of each $P_a(\alpha_n)$:
\begin{eqnarray*}
&&{\rm (a).}\;\;{\rm card}\big(\{\sigma\in\Lambda_{k,r}: P_a(\alpha_n)\cap J_\sigma\cap E\neq\emptyset\})\;\;{\rm for}\;\;a\in\alpha_n;\\
&&{\rm (b).}\;\;{\rm card}\big(\{a\in\alpha_n: P_a(\alpha_n)\cap J_\sigma\cap E\neq\emptyset\})\;\;{\rm for}\;\;\sigma\in\Lambda_{k,r}.
\end{eqnarray*}
Once (a) and (b) are well addressed, we will be able to give upper estimates for $\overline{J}(\alpha_n,\mu)$ and $e^r_{n,r}(\mu)-e^r_{n+1,r}(\mu)$ in a convenient manner, while lower estimates will be established by applying \cite[Theorem 4.1]{GL:00} and \cite[Proposition 12.12]{GL:00}.
\end{remark}

 For each $\sigma\in\Omega^*$, let $g_\sigma$ be an arbitrary similitude on $\mathbb{R}^1$ of similarity ratio $c_\sigma$. For the empty word, let $g_\theta$ be the identity map. Let $\mu(\cdot|J_\sigma)$ denote the conditional measure of $\mu$ on $J_\sigma$, namely,
 $ \mu(B|J_\sigma):=p_\sigma^{-1}\mu(B\cap J_\sigma)$ for every  Borel set $B\subset\mathbb{R}^1$. We denote by $\nu_\sigma$ the image measure of $\mu(\cdot|J_\sigma)$:
\begin{equation}\label{nusigma}
\nu_\sigma:=\mu(\cdot|J_\sigma)\circ g_\sigma,\;{\rm implying}\;\;\mu(\cdot|J_\sigma)=\nu_\sigma\circ g_\sigma^{-1}.
\end{equation}
Let $K_\sigma$ denote the support of $\nu_\sigma$. Then we have
\[
K_\sigma\subset g_\sigma^{-1}(J_\sigma)\;\;{\rm and}\;\;|K_\sigma|\leq 1.
\]
\begin{remark}
One can see that $\nu_\sigma$ is an amplification for $\mu(\cdot|J_\sigma)$. It will allow us to connect the integrals over $J_\sigma$ with $\mathcal{E}(\sigma)$, while for suitably chosen $k$ (cf. (\ref{n})) and every $\sigma\in\Lambda_{k,r}$, as we will see, $\mathcal{E}(\sigma)\asymp\frac{1}{n}e^r_{n,r}(\mu)$.
\end{remark}

The subsequent four lemmas will be used to estimate the crucial numbers as mentioned in Remark \ref{crucial}.
\begin{lemma}\label{local}
There exist constants $C,t>0$ such that for every $\sigma\in\Omega^*$, we have
\begin{equation}\label{g2}
\sup_{x\in\mathbb{R}^1}\nu_\sigma(B(x,\epsilon))\leq C\epsilon^t\;{\rm for\;all}\;\;\epsilon>0.
\end{equation}
\end{lemma}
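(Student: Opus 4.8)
\emph{Proof strategy.} The plan is to transfer the estimate from $\nu_\sigma$ to the Moran measure $\mu$ itself, using that $\nu_\sigma$ is an amplification of $\mu(\cdot\mid J_\sigma)$ by the factor $c_\sigma^{-1}$. Since $g_\sigma$ is a similitude of ratio $c_\sigma$, it carries $B(x,\epsilon)$ onto $B(g_\sigma(x),c_\sigma\epsilon)$, so by (\ref{nusigma})
\[
\nu_\sigma(B(x,\epsilon))=\mu\big(B(g_\sigma(x),c_\sigma\epsilon)\mid J_\sigma\big)=p_\sigma^{-1}\,\mu\big(B(g_\sigma(x),c_\sigma\epsilon)\cap J_\sigma\big).
\]
Hence it suffices to find constants $C,t>0$, \emph{independent of} $\sigma$, such that $\mu(B(y,c_\sigma\epsilon)\cap J_\sigma)\le C\,p_\sigma\,\epsilon^{t}$ for every $y\in\mathbb{R}^1$ and every $0<\epsilon<1$; the range $\epsilon\ge1$ is then trivial since $\nu_\sigma$ is a probability measure, provided we also require $C\ge1$.

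For fixed $\sigma$ and $0<\epsilon<1$ I would pass to the finite set of descendants of $\sigma$ of the appropriate relative size,
\[
\Gamma_\sigma(\epsilon):=\{\omega\in\Omega^*:\ \sigma\prec\omega,\ c_\omega\le c_\sigma\epsilon<c_{\omega^-}\},
\]
which is a maximal antichain among the words extending $\sigma$, so that $\sum_{\omega\in\Gamma_\sigma(\epsilon)}p_\omega=p_\sigma$. From the defining inequalities and $\underline c>0$ one gets $\underline c\,c_\sigma\epsilon<c_\omega\le c_\sigma\epsilon$ for every $\omega\in\Gamma_\sigma(\epsilon)$; moreover the intervals $J_\omega$, $\omega\in\Gamma_\sigma(\epsilon)$, are pairwise non-overlapping subintervals of $J_\sigma$ whose union differs from $J_\sigma$ only by the finitely many shared endpoints, hence by a $\mu$-null set as $\mu$ is non-atomic. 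Therefore
\[
\mu\big(B(y,c_\sigma\epsilon)\cap J_\sigma\big)\le\sum_{\omega\in\Gamma_\sigma(\epsilon),\ J_\omega\cap B(y,c_\sigma\epsilon)\neq\emptyset}p_\omega,
\]
and I would estimate the right-hand side by two independent bounds. First, any $J_\omega$ meeting $B(y,c_\sigma\epsilon)$ is contained in the interval of length $4c_\sigma\epsilon$ centred at $y$ (because $|J_\omega|\le c_\sigma\epsilon$), and these intervals being non-overlapping of length $>\underline c\,c_\sigma\epsilon$, there are at most $\lceil4/\underline c\rceil$ of them. Second, for each such $\omega$, with $l:=|\omega|-|\sigma|\ge1$ we have $p_\omega/p_\sigma=\prod_{i=|\sigma|+1}^{|\omega|}p_{i,\omega_i}\le\overline p^{\,l}$ and $c_\sigma\epsilon\ge c_\omega=c_\sigma\prod_{i=|\sigma|+1}^{|\omega|}c_{i,\omega_i}\ge c_\sigma\,\underline c^{\,l}$, whence $\underline c^{\,l}\le\epsilon$ and so $l\ge\log\epsilon/\log\underline c$; taking $t:=\log\overline p/\log\underline c$, which is positive since $0<\underline c\le\overline c<1$ and $\overline p\le1-\underline p<1$ (as every $n_k\ge2$), we get $\overline p^{\,l}\le\overline p^{\,\log\epsilon/\log\underline c}=\epsilon^{\,t}$, hence $p_\omega\le p_\sigma\,\epsilon^{t}$. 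Multiplying the two bounds gives $\mu(B(y,c_\sigma\epsilon)\cap J_\sigma)\le\lceil4/\underline c\rceil\,p_\sigma\,\epsilon^{t}$, which is the desired inequality with $C:=\lceil4/\underline c\rceil$.

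The crucial point — and the reason for working with the amplified measures $\nu_\sigma$ — is the \emph{uniformity in} $\sigma$: although $\Gamma_\sigma(\epsilon)$ depends on $\sigma$, every constant occurring above ($\underline c$, $\overline p$, and hence $C$, $t$) is a global constant of the defining sequences, so no dependence on $\sigma$ enters the final bound. The only other step requiring a word of justification is the passage to the sum over $\Gamma_\sigma(\epsilon)$: since the cylinders $J_\omega$, $\omega\in\Gamma_\sigma(\epsilon)$, overlap only at finitely many endpoints and $\mu$ is non-atomic, $\mu\big(\bigcup_{\omega\in\Gamma_\sigma(\epsilon)}J_\omega\big)=\sum_{\omega\in\Gamma_\sigma(\epsilon)}p_\omega=p_\sigma=\mu(J_\sigma)$, so the cylinders disjoint from $B(y,c_\sigma\epsilon)$ may be discarded at no cost in $\mu$-mass. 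Everything else is a routine one-dimensional packing count.
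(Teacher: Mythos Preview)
Your argument is correct and follows essentially the same route as the paper's: both pass to the maximal antichain of descendants of $\sigma$ at relative scale $\epsilon$, bound the number of such cylinders meeting the ball by a one-dimensional packing count, and bound each $p_\omega/p_\sigma$ by $\overline p^{\,l}\le\epsilon^{t}$ with $t=\log\overline p/\log\underline c$; the paper merely works in the amplified coordinates and quotes \cite[Lemma~12.3]{GL:00} for the reduction to small $\epsilon$, which is cosmetic. One small slip: the union $\bigcup_{\omega\in\Gamma_\sigma(\epsilon)}J_\omega$ need \emph{not} differ from $J_\sigma$ only at endpoints when $\sum_j c_{k,j}<1$ (there are genuine gaps), but since $\mu$ is supported on $E$ and $E\cap J_\sigma\subset\bigcup_{\omega\in\Gamma_\sigma(\epsilon)}J_\omega$, your passage to the sum over $\omega$ is still valid.
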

\begin{proof}
By \cite[Lemma 12.3]{GL:00}, it suffices to show (\ref{g2}) for $x\in K_\sigma$ and $\epsilon\in(0,\underline{c})$. Set
\begin{eqnarray*}
&\mathcal{A}_1(\sigma,\epsilon):=\{\tau\in\bigcup_{k=1}^\infty\prod_{h=|\sigma|+1}^{|\sigma|+k}\Omega_h: c_{\tau^-}\geq\epsilon>c_\tau\},
\\&\mathcal{A}_2(\sigma,\epsilon):=\{\tau\in\mathcal{A}_1(\sigma,\epsilon):g_\sigma^{-1}(J_{\sigma\ast\tau^-})\cap B(x,\epsilon)\neq\emptyset\},
\end{eqnarray*}
where $c_\tau$ is defined in the same way as we did for words in $\Omega^*$ (cf. (\ref{productratio})). Then one can see that ${\rm card}(\mathcal{A}_2(\sigma,\epsilon))\leq 4$ and $|\tau|\geq\log\epsilon/\log\underline{c}$. It follows that
\[
\nu_\sigma(B(x,\epsilon))\leq 4\overline{p}^{\frac{\log\epsilon}{\log\underline{c}}-1}=4\overline{p}^{-1}\epsilon^{\frac{\log\overline{p}}{\log\underline{c}}}.
\]
This and \cite[Lemma 12.3]{GL:00} complete the proof of the lemma.
\end{proof}

Our next lemma is an analogue of \cite[Lemma 5.8]{GL:04} by Graf and Luschgy.
\begin{lemma}\label{glanalogue}
Let $\nu$ be a probability measure $\mathbb{R}^q$ with compact support $K_\nu$. Then
\[
e^r_{l,r}(\nu)-e^r_{l+1,r}(\nu)\leq 3^r|K_\nu|^r(l+1)^{-1},\;l\geq 1.
\]
\end{lemma}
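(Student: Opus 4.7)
The plan is a one-point-removal comparison. Let $\beta \in C_{l+1, r}(\nu)$ and let $\{P_b(\beta)\}_{b \in \beta}$ be a Voronoi partition with respect to $\beta$. Since $\sum_{b \in \beta} \nu(P_b(\beta)) = 1$ with ${\rm card}(\beta) = l+1$, by pigeonhole there exists $b^* \in \beta$ with $\nu(P_{b^*}(\beta)) \leq 1/(l+1)$. Setting $\gamma := \beta \setminus \{b^*\} \in \mathcal{D}_l$, we have $e_{l, r}^r(\nu) \leq \int d(x, \gamma)^r\, d\nu(x)$.

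Next I would split this integral along the Voronoi cells of $\beta$. For each $a \in \beta \setminus \{b^*\}$ and $x \in P_a(\beta)$, since $a \in \gamma$ we have $d(x, \gamma) \leq d(x, a) = d(x, \beta)$, so summing these contributions gives at most $\sum_{a \neq b^*} I_a(\beta, \nu) \leq e_{l+1, r}^r(\nu)$. For $x \in P_{b^*}(\beta) \cap K_\nu$, I would invoke a uniform bound $d(x, \gamma) \leq 3|K_\nu|$; combining then yields
\[
e_{l, r}^r(\nu) \leq e_{l+1, r}^r(\nu) + (3|K_\nu|)^r\, \nu(P_{b^*}(\beta)) \leq e_{l+1, r}^r(\nu) + \frac{3^r|K_\nu|^r}{l+1},
\]
which rearranges to the claimed inequality.

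The only nontrivial step is the uniform bound $d(x, \gamma) \leq 3|K_\nu|$ on $P_{b^*}(\beta) \cap K_\nu$, and this is what I expect to be the main (minor) obstacle. I would handle it by invoking a standard reduction, in the spirit of \cite[Theorem~4.1]{GL:00}, that an optimal quantizer can be taken inside $\overline{\mathrm{conv}}(K_\nu)$, a convex set whose diameter equals $|K_\nu|$. Consequently $d(x, b) \leq |K_\nu|$ for every $x \in K_\nu$ and every $b \in \gamma \subset \overline{\mathrm{conv}}(K_\nu)$, so $d(x, \gamma) \leq |K_\nu| \leq 3|K_\nu|$. The factor $3^r$ in the statement is thus deliberately slack: the argument in fact yields the sharper bound $|K_\nu|^r/(l+1)$, which is more than enough for the asserted estimate.
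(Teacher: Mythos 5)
Your overall route is the same as the paper's: take $\beta\in C_{l+1,r}(\nu)$, use pigeonhole to find a Voronoi cell of $\nu$-mass at most $(l+1)^{-1}$, delete its center, and charge the resulting surcharge only on that cell via a uniform bound on $d(\cdot,\gamma)$ times its mass; the splitting of the integral and the inequality $\sum_{a\neq b^*}I_a(\beta,\nu)\leq e^r_{l+1,r}(\nu)$ are exactly as in the paper.

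The one step you defer, the uniform bound $d(x,\gamma)\leq 3|K_\nu|$ on $P_{b^*}(\beta)\cap K_\nu$, is also the only place you diverge, and your justification has a gap in the stated generality. The distance here is induced by an \emph{arbitrary} norm on $\mathbb{R}^q$, and for non-Euclidean norms it is false that every optimal set lies in $\overline{\mathrm{conv}}(K_\nu)$: for the maximum norm on $\mathbb{R}^2$, $\nu=\frac{1}{2}(\delta_{(0,0)}+\delta_{(1,0)})$ and $r=1$, the point $(1/2,3/10)$ is a $1$-optimal center outside the segment $\mathrm{conv}(K_\nu)$. So you cannot apply hull containment to the given $\beta$, and the weaker claim that \emph{some} optimal set can be chosen inside the hull is not what the cited result of \cite{GL:00} provides (as used in this paper it gives positive cell masses, strict decrease of $e_{n,r}$ and subset optimality); it would itself require proof, since metric projection onto a convex set need not be distance-reducing outside the Euclidean setting. (In $\mathbb{R}^1$, where the lemma is actually applied, the containment is easy, so your argument can be salvaged there, but not as written for $\mathbb{R}^q$.) The paper's proof sidesteps this entirely: since every Voronoi cell of the optimal $\beta$ has positive mass, one may pick $b\in\gamma$ and $y\in P_b(\beta)\cap K_\nu$, and then for $x\in P_{b^*}(\beta)\cap K_\nu$ one has $d(x,\gamma)\leq d(x,y)+d(y,b)\leq |K_\nu|+\sup_{z\in K_\nu}d(z,\beta)\leq 3|K_\nu|$, the last bound holding because $e_{l+1,r}(\nu)\leq e_{1,r}(\nu)\leq |K_\nu|$ forces some point of $K_\nu$ to lie within $|K_\nu|$ of $\beta$. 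Thus the factor $3$ is not mere slack: it is what the norm-free argument yields, and replacing your hull step by this triangle-inequality argument makes your proof coincide with the paper's.
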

\begin{proof}
Let $\beta_{l+1}\in C_{l+1,r}(\nu)$ and let $\{P_b(\beta_{l+1})\}_{b\in\beta_{l+1}}$ be a Voronoi partition with respect to $\beta_{l+1}$. There exists some $b_0\in\beta_{l+1}$ with $\nu(P_{b_0}(\beta_{l+1}))\leq(l+1)^{-1}$. We set $\gamma:=\beta_{l+1}\setminus\{b_0\}$. Then we have $d(x,\gamma)=d(x,\beta_{l+1})$ for all $x\in K_\nu\setminus P_{b_0}(\beta_{l+1})$. Note that $\sup_{x\in K_\nu}d(x,\beta_{l+1})\leq 2|K_\nu|$. Fix an arbitrary $b\in\gamma$ and $y\in P_b(\beta_{l+1})\cap K_\nu$. Then for $x\in P_{b_0}(\beta_{l+1})\cap K_\nu$, we have
\[
d(x,\gamma)\leq d(x,y)+d(y,b)\leq3|K_\nu|.
\]
Note that $e^r_{l,r}(\nu)\leq I(\gamma,\nu)$. It follows that
\begin{eqnarray*}
e^r_{l,r}(\nu)-e^r_{l+1,r}(\nu)&\leq& I(\gamma,\nu)-I(\beta_{l+1},\nu)\\&=&\int_{P_{b_0}(\beta_{l+1})}d(x,\gamma)^rd\nu(x)-\int_{P_{b_0}(\beta_{l+1})}d(x,\beta_{l+1})^rd\nu(x)\\&\leq& \int_{P_{b_0}(\beta_{l+1})}d(x,\gamma)^rd\nu(x)\leq 3^r|K_\nu|^r(l+1)^{-1}.
\end{eqnarray*}
This completes the proof of the lemma.
\end{proof}

\begin{lemma}\label{zhu1}
Let $\nu$ be a probability measure $\mathbb{R}^q$ with compact support $K_\nu$. Assume that $|K_\nu|\leq 1$ and for some constants $D$ and $s$, $\sup_{x\in\mathbb{R}^q}\nu(x,\epsilon)\leq D\epsilon^s$ for all $\epsilon>0$. Then there exists a number $\zeta_{l,r}>0$ depending on $l,D$ and $s$ such that
\[
e^r_{l,r}(\nu)-e^r_{l+1,r}(\nu)\geq \zeta_{l,r}.
\]
\end{lemma}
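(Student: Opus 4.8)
The plan is to bound the gap $e^r_{l,r}(\nu)-e^r_{l+1,r}(\nu)$ from below by exhibiting, for an $l$-optimal set $\beta_l$, one extra point $b^*$ whose insertion lowers the cost by an amount controlled solely in terms of $l,D,s$ (the order $r$ and the ambient dimension $q$ being fixed throughout). Since $K_\nu$ is compact, $C_{l,r}(\nu)\neq\emptyset$; fix $\beta_l\in C_{l,r}(\nu)$, so $e^r_{l,r}(\nu)=I(\beta_l,\nu)$. As $e^r_{l+1,r}(\nu)\le I(\beta_l\cup\{b^*\},\nu)$ for every $b^*$, we get $e^r_{l,r}(\nu)-e^r_{l+1,r}(\nu)\ge I(\beta_l,\nu)-I(\beta_l\cup\{b^*\},\nu)$, and it remains to bound this difference below by a positive constant of the required type.

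First I would record an a priori lower bound for $e^r_{l,r}(\nu)$. For any $\alpha$ with ${\rm card}(\alpha)\le l$ one has $\{x:d(x,\alpha)<\epsilon\}\subset\bigcup_{a\in\alpha}B(a,\epsilon)$, so the hypothesis yields $\nu(\{d(x,\alpha)<\epsilon\})\le lD\epsilon^s$; taking $\epsilon_l:=(2lD)^{-1/s}$ makes the right side $1/2$, whence $I(\alpha,\nu)\ge\tfrac12\epsilon_l^r$ and thus $e^r_{l,r}(\nu)\ge\rho_{l,r}:=\tfrac12(2lD)^{-r/s}>0$. For the fixed $\beta_l$ above, put $\delta_l:=(\rho_{l,r}/2)^{1/r}$ and $A:=K_\nu\cap\{x:d(x,\beta_l)\ge\delta_l\}$, and split $e^r_{l,r}(\nu)=\int d(x,\beta_l)^r\,d\nu$ at the level $\delta_l$: since $\nu$ is carried by $K_\nu$ and $\nu(\{d(x,\beta_l)<\delta_l\})\le 1$, the near part is $\le\delta_l^r=\rho_{l,r}/2$, hence $\int_A d(x,\beta_l)^r\,d\nu\ge\rho_{l,r}/2$.

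The localization step is a pigeonhole over a finite cover. As $A\subset K_\nu$ has diameter $\le 1$, it is covered by $N=N(q,\delta_l)$ balls of radius $\delta_l/8$ (one may take $N\le(24/\delta_l)^q$), so for one of them, $B(z_0,\delta_l/8)$, we get $\int_{A\cap B(z_0,\delta_l/8)}d(x,\beta_l)^r\,d\nu\ge\rho_{l,r}/(2N)$; in particular $A\cap B(z_0,\delta_l/8)$ has positive $\nu$-measure and we choose $b^*$ in it. Since $b^*\in A$, every $a\in\beta_l$ satisfies $d(b^*,a)\ge d(b^*,\beta_l)\ge\delta_l$, and $B(z_0,\delta_l/8)\subset B(b^*,\delta_l/4)$. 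Consequently, for $x\in A\cap B(z_0,\delta_l/8)$ we have $d(x,b^*)\le\delta_l/4\le\tfrac14 d(x,\beta_l)<d(x,\beta_l)$, so $b^*$ is the closest point of $\beta_l\cup\{b^*\}$ to $x$ there and
\begin{equation*}
d(x,\beta_l)^r-d(x,\beta_l\cup\{b^*\})^r=d(x,\beta_l)^r-d(x,b^*)^r\ge(1-4^{-r})\,d(x,\beta_l)^r .
\end{equation*}
Integrating over $\mathbb{R}^q$ and using that $d(x,\beta_l)^r-d(x,\beta_l\cup\{b^*\})^r\ge 0$ everywhere gives
\begin{equation*}
e^r_{l,r}(\nu)-e^r_{l+1,r}(\nu)\ge I(\beta_l,\nu)-I(\beta_l\cup\{b^*\},\nu)\ge(1-4^{-r})\frac{\rho_{l,r}}{2N}=:\zeta_{l,r}>0,
\end{equation*}
and unwinding the definitions shows $\zeta_{l,r}$ depends only on $l,D,s$ (and the fixed $q,r$), as claimed.

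The one genuine subtlety — and the reason for arranging the argument this way — is the uniformity requirement: the final constant must not involve $\nu$ beyond $D$, $s$ and $|K_\nu|\le 1$. This forces one to extract the a priori bound $\rho_{l,r}$ from the mass estimate alone and, more importantly, to capture the gain from inserting $b^*$ through the integral of $d(\cdot,\beta_l)^r$ over a small ball rather than through the $\nu$-mass of that ball, since the hypothesis only bounds such masses from \emph{above}. The covering/pigeonhole device is precisely what turns the global quantity $\int_A d(x,\beta_l)^r\,d\nu$ into a localized gain that a single added point can realize.
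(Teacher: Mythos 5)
Your argument is correct and follows essentially the same strategy as the paper's (which simply cites Lemma 2.3 of \cite{Zhu:17} and points to Lemma \ref{pre06a} for the idea): the uniform bound $\sup_x\nu(B(x,\epsilon))\leq D\epsilon^s$ forces a definite amount of $\nu$ to lie at distance at least $\delta_l$ from any $l$-point set, a covering of $K_\nu$ plus pigeonhole localizes it in a small ball, and inserting one point there produces a gain depending only on $l,D,s$ (and the fixed $q,r$). The only cosmetic difference is that you pigeonhole over the integral of $d(\cdot,\beta_l)^r$ after an a priori lower bound on $e^r_{l,r}(\nu)$, whereas the paper's version pigeonholes over $\nu$-mass of the far set (so your closing remark that the gain cannot be captured via the $\nu$-mass of a small ball is not quite accurate, though it does not affect the validity of your proof).
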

\begin{proof}
This is Lemma 2.3 of \cite{Zhu:17}. See Lemma \ref{pre06a} below for the basic idea.
\end{proof}

Let $\mu$ be as defined in (\ref{moranmeasure}) and $\beta\subset\mathbb{R}^1$. Let $l\geq 1$ and $L\geq 1$. We write
\[
I_\rho(\beta,\mu):=\int_{J_\rho} d(x,\beta)^rd\mu(x),\;\rho\in\Omega^*;\;\;\Psi_{l,L}:=\prod_{h=l+1}^{l+L}\Omega_h.
\]

 Using Lemmas \ref{glanalogue} and \ref{zhu1}, we are able to choose some constants which will be used in the characterization for the optimal sets. We have

\begin{lemma}\label{pre01}
For every $\sigma\in\Omega^*$, let $\nu_\sigma$ be as defined in (\ref{nusigma}). Then
\begin{enumerate}
 \item[\rm (1)] There exists an integer $M_1\geq4$, such that for $\sigma\in\Omega^*,\omega\in\Psi_{|\sigma|,3}$ and $\beta\subset J_{\sigma\ast\omega}^c$,
 \[
 I_{\sigma\ast\omega}(\beta,\mu)>\mathcal{E}(\sigma)e^r_{M_1-3,r}(\nu_\sigma).
 \]
\item[\rm (2)]There exists an integer $M_2>M_1+4$, such that, for all $l\geq M_2$ and $\sigma,\tau\in\Omega^*$,
\[
e^r_{l-M_1-4,r}(\nu_\sigma)-e^r_{l+2,r}(\nu_\sigma)<\eta_r\big(e^r_{M_1+1,r}(\nu_\tau)-e^r_{M_1+2,r}(\nu_\tau)\big).
\]
\item[\rm (3)] Let $M_0$ be the same as in (\ref{lambdacard}). There exists an integer $M_3>M_0(M_2+2)+7$, such that, for all $l\geq M_3$ and every pair $\sigma,\tau\in\Omega^*$,
\[
e^r_{l-7,r}(\nu_\sigma)-e^r_{l+2,r}(\nu_\sigma)<\eta_r\min_{1\leq h\leq M_0(M_2+4)+2}(e^r_{h,r}(\nu_\tau)-e^r_{h+1,r}(\nu_\tau)).
\]
\end{enumerate}
\end{lemma}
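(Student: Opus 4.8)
The plan is to obtain each of the three integers by combining the uniform upper bound of Lemma \ref{glanalogue} with the uniform lower bound of Lemma \ref{zhu1}, exploiting the fact that these bounds hold \emph{uniformly} over all the auxiliary measures $\nu_\sigma$. The two key uniformities are: first, by Lemma \ref{local} there are constants $C,t>0$, independent of $\sigma$, such that $\sup_x\nu_\sigma(B(x,\epsilon))\le C\epsilon^t$ for all $\epsilon>0$ and all $\sigma\in\Omega^*$; and second, $|K_\sigma|\le 1$ for every $\sigma$. Hence Lemma \ref{zhu1} supplies, for each fixed index $l$, a single constant $\zeta_{l,r}>0$ (depending only on $l$, $C$, $t$, hence \emph{not} on $\sigma$) with
\[
e^r_{l,r}(\nu_\sigma)-e^r_{l+1,r}(\nu_\sigma)\ge \zeta_{l,r}\qquad\text{for every }\sigma\in\Omega^*,
\]
while Lemma \ref{glanalogue} gives $e^r_{l,r}(\nu_\sigma)-e^r_{l+1,r}(\nu_\sigma)\le 3^r(l+1)^{-1}$ uniformly in $\sigma$. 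Telescoping the latter, for any $m<M$ we get the uniform bound $e^r_{m,r}(\nu_\sigma)-e^r_{M,r}(\nu_\sigma)\le 3^r\sum_{j=m+1}^{M}j^{-1}\le 3^r\log(M/m)$ (roughly), which tends to $0$ as $m\to\infty$ with $M-m$ fixed. This decay, played against the fixed positive quantities $\zeta_{l,r}$ on the right-hand sides, is what forces the existence of the thresholds $M_2,M_3$.

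For part (1): since $\omega\in\Psi_{|\sigma|,3}$ has length $3$ below $\sigma$, the cylinder $J_{\sigma\ast\omega}$ has $\mu$-mass $p_\sigma p_\omega\ge p_\sigma\underline{p}^{\,3}$ and diameter $c_\sigma c_\omega\ge c_\sigma\underline{c}^{\,3}$; moreover $J_{\sigma\ast\omega}$ is a genuine subinterval of $J_\sigma$ bounded away from $\partial J_\sigma$, so there is a uniform constant $\kappa>0$ with $d(x,J_{\sigma\ast\omega}^c)\ge\kappa c_\sigma$ for all $x$ in, say, the ``middle third'' of $J_{\sigma\ast\omega}$, a region carrying $\nu_\sigma$-mass bounded below by a uniform constant times $p_\omega$. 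Integrating, $I_{\sigma\ast\omega}(\beta,\mu)\ge p_\sigma\cdot(\text{const})\cdot c_\sigma^r=(\text{const})\,\mathcal{E}(\sigma)$ for every $\beta\subset J_{\sigma\ast\omega}^c$. On the other hand $\mathcal{E}(\sigma)e^r_{M_1-3,r}(\nu_\sigma)\le\mathcal{E}(\sigma)\cdot(\text{diam }K_\sigma)^r e^r_{M_1-3,r}(\text{unit-mass normalized})$, and since $e^r_{m,r}(\nu_\sigma)\to 0$ as $m\to\infty$ uniformly in $\sigma$ (again by Lemma \ref{local} together with \cite[Lemma 12.3 / Chapter 6]{GL:00}, or simply by covering $K_\sigma$ by $m$ intervals of length $\sim m^{-1/t}$), we may pick $M_1\ge 4$ large enough that $e^r_{M_1-3,r}(\nu_\sigma)$ is smaller than that uniform lower constant, for all $\sigma$ simultaneously. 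This gives the strict inequality.

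For parts (2) and (3): fix $M_1$ from (1). By the uniform lower bound, $\eta_r\big(e^r_{M_1+1,r}(\nu_\tau)-e^r_{M_1+2,r}(\nu_\tau)\big)\ge\eta_r\zeta_{M_1+1,r}=:\delta_2>0$, a constant independent of $\tau$. By the telescoped uniform upper bound, $e^r_{l-M_1-4,r}(\nu_\sigma)-e^r_{l+2,r}(\nu_\sigma)\le 3^r\sum_{j=l-M_1-3}^{l+2}j^{-1}\to 0$ as $l\to\infty$, uniformly in $\sigma$; so choose $M_2>M_1+4$ with this sum $<\delta_2$ for all $l\ge M_2$. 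Part (3) is identical in structure: the right-hand side is $\eta_r$ times the minimum over a \emph{finite} range $1\le h\le M_0(M_2+4)+2$ of the gaps $e^r_{h,r}(\nu_\tau)-e^r_{h+1,r}(\nu_\tau)$, and since each of these finitely many gaps is bounded below by the corresponding $\zeta_{h,r}>0$ uniformly in $\tau$, the whole right-hand side is bounded below by $\delta_3:=\eta_r\min_{1\le h\le M_0(M_2+4)+2}\zeta_{h,r}>0$, a fixed positive constant; then pick $M_3>M_0(M_2+2)+7$ so that $3^r\sum_{j=l-6}^{l+2}j^{-1}<\delta_3$ for all $l\ge M_3$.

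The only genuinely delicate point is part (1): one must check that the geometric constant $\kappa$ (the relative distance from a fixed subregion of $J_{\sigma\ast\omega}$ to the complement $J_{\sigma\ast\omega}^c$) and the $\nu_\sigma$-mass of that subregion can both be chosen uniformly over $\sigma$ and over the finitely-constrained $\omega\in\Psi_{|\sigma|,3}$ — this uses precisely that $\underline c>0$ and $\underline p>0$ from \eqref{assume}, which bound the position and mass of $J_{\sigma\ast\omega}$ inside $J_\sigma$ away from degeneracy, together with the fact (the ``advantage'' highlighted in the introduction) that a subinterval is separated from its complement by its endpoints. Parts (2) and (3), by contrast, are soft: once the uniform Lemmas \ref{glanalogue} and \ref{zhu1} are in hand, they are a matter of choosing thresholds large enough to beat fixed positive constants by a convergent-to-zero tail.
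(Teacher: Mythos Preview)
Your handling of parts (2) and (3) is correct and matches the paper's argument exactly: telescope Lemma~\ref{glanalogue} to get a uniform upper bound tending to zero, bound the right-hand side below by $\eta_r\zeta_{M_1+1,r}$ (resp.\ $\eta_r\min_h\zeta_{h,r}$) via Lemma~\ref{zhu1}, and pick the threshold. Nothing to add there.

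Part (1), however, has a gap, and it is precisely at the point you flag as ``delicate.'' First, the assertion that $J_{\sigma\ast\omega}$ is ``bounded away from $\partial J_\sigma$'' is simply false: if $\omega$ indexes the leftmost (or rightmost) descendant at each of the three levels, then $J_{\sigma\ast\omega}$ shares an endpoint with $J_\sigma$. Fortunately this claim is irrelevant --- what you actually need is distance from a central region of $J_{\sigma\ast\omega}$ to $J_{\sigma\ast\omega}^c$, not to $J_\sigma^c$, and that follows from $|J_{\sigma\ast\omega}|\ge\underline{c}^3c_\sigma$ alone. The real problem is the mass claim: the \emph{geometric} middle third of $J_{\sigma\ast\omega}$ need not align with the Moran cylinder structure, and you give no argument that it carries $\nu_\sigma$-mass bounded below uniformly. (One could salvage this by invoking Lemma~\ref{local} to bound the mass of two small boundary strips and thus force the complementary central strip to have mass $\ge 1/2$, but the ``third'' is too coarse for that without checking $2C(1/3)^t<1$.) The paper avoids this issue entirely by working with cylinders rather than geometric intervals: it descends two further levels and selects $\tau_0\in\Psi_{|\sigma|+3,2}$ so that $J_{\sigma\ast\omega\ast\tau_0}$ is separated from both endpoints of $J_{\sigma\ast\omega}$ by at least $\underline{c}^5 c_\sigma$ (using $n_k\ge 2$ at each level to step away from each endpoint in turn). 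Being a genuine cylinder, $J_{\sigma\ast\omega\ast\tau_0}$ automatically has mass $\ge p_\sigma\underline{p}^5$, yielding $I_{\sigma\ast\omega}(\beta,\mu)\ge\mathcal{E}(\sigma)\,\underline{p}^5\underline{c}^{5r}=:\mathcal{E}(\sigma)C_0$, after which $M_1$ is chosen so that $e^r_{M_1-3,r}(\nu_\sigma)<C_0$ uniformly.
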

\begin{proof}
(1) Let $\sigma\in\Omega^*$ and $\omega\in\Psi_{|\sigma|,3}$ be given. By the construction of $E$, there exists a $\tau_0\in\Psi_{|\sigma|+5}$ such that
\[
d(J_{\sigma\ast\omega\ast\tau_0},J_{\sigma\ast\omega}^c)\geq\underline{c}^5|J_\sigma|=\underline{c}^5c_\sigma,\;
\mu(J_{\sigma\ast\omega\ast\tau_0})=p_{\sigma\ast\omega\ast\tau_0}\geq p_\sigma\underline{p}^5.
\]
Hence, for $\beta\subset J_{\sigma\ast\omega}^c$, we have $g_\sigma^{-1}(\beta)\subset(g_\sigma^{-1}(J_{\sigma\ast\omega}))^c$ and
\[
d(g_\sigma^{-1}(J_{\sigma\ast\omega\ast\tau_0}),(g_\sigma^{-1}(J_{\sigma\ast\omega}))^c)\geq \underline{c}^5.
\]
Using this and the similarity of $g_\sigma$, we deduce
\begin{eqnarray*}\label{g3}
I_{\sigma\ast\omega}(\beta,\mu)&=&\int_{J_{\sigma\ast\omega}} d(x,\beta)^rd\mu(x)\nonumber\\
&\geq&\mu(J_\sigma)\int_{J_{\sigma\ast\omega\ast\tau_0}} d(x,\beta)^rd\mu(\cdot|J_\sigma)(x)
\nonumber\\&=&p_\sigma\int_{J_{\sigma\ast\omega\ast\tau_0}} d(x,\beta)^rd\nu_\sigma\circ g_\sigma^{-1}(x)\nonumber\\
&=&p_\sigma\int_{g_\sigma^{-1}(J_{\sigma\ast\omega\ast\tau_0})} d(g_\sigma(x),\beta)^rd\nu_\sigma(x)\nonumber\\
&=&\mathcal{E}(\sigma)\int_{g_\sigma^{-1}(J_{\sigma\ast\omega\ast\tau_0})} d(x,g_\sigma^{-1}(\beta))^rd\nu_\sigma(x)\nonumber\\&\geq&
\mathcal{E}(\sigma)\underline{p}^5\underline{c}^{5r}=:\mathcal{E}(\sigma)C_0.
\end{eqnarray*}
For $x\in\mathbb{R}$, let $[x]:=\max\{h\in\mathbb{N}:h\leq x\}$. Note that $|K_\sigma|\leq 1$. By Lemma \ref{glanalogue}, (1) follows by setting $M_1:=[3^rC_0^{-1}]+4$.

(2) Let $\sigma,\tau\in\Omega^*$. Note that $|K_\tau|\leq 1$. By Lemma \ref{zhu1}, for $M_1$ as chosen in (1), there exists a number $\zeta_{M_1,r}>0$ depending on $C$ and $t$, such that
\[
e_{M_1+1,r}^r(\nu_\tau)-e_{M_1+2,r}^r(\nu_\tau)>\zeta_{M_1,r}.
\]
Hence, by Lemma \ref{glanalogue}, for $A:=3^r(M_1+6)([\zeta_{M_1,r}^{-1}\eta_r^{-1}]+1)$, and $l\geq A$,
\begin{eqnarray*}
e^r_{l,r}(\nu_\sigma)-e^r_{l+M_1+6,r}(\nu_\sigma)&<&\frac{3^r(M_1+6)}{l+1}<\eta_r\zeta_{M_1,r}
\\&<&\eta_r\big(e^r_{M_1+1,r}(\nu_\tau)-e^r_{M_1+2,r}(\nu_\tau)\big).
\end{eqnarray*}
It suffices to set $M_2:=[A]+M_1+4$.

(3) This can be seen in the same manner as we did for (2).
\end{proof}

\section{A characterization of the $n$-optimal sets for $\mu$}
Let $M_i, i=1,2,3$, be the integers as chosen in section 2. For every $n\geq (M_2+2)\phi_{1,r}$, there exists a unique integer $k$ such that
\begin{eqnarray}\label{n}
(M_2+2)\phi_{k,r}\leq n<(M_2+2)\phi_{k+1,r}.
\end{eqnarray}
In the remaining part of the paper, we always assume that $n\geq (M_2+2)\phi_{1,r}$ and let $k$ be the integer as chosen in (\ref{n}). In this section, we are devoted to upper and lower estimates for the numbers ${\rm card}(\alpha_n\cap J_\sigma)$ with $\sigma\in\Lambda_{k,r}$. These numbers will enable us to address (a) and (b) in Remark \ref{crucial}.
For $\sigma\in\Omega^*$, we denote by $x_1(\sigma),x_2(\sigma)$ the left and right endpoint of $J_\sigma$, namely,
\begin{equation}\label{endpoints}
x_1(\sigma):=\min_{x\in J_\sigma}x,\;\;x_2(\sigma):=\max_{x\in J_\sigma}x.
\end{equation}

\begin{remark}\label{remgl}
We will need the following facts.
\begin{enumerate}
\item[(i)]Note that ${\rm card}({\rm supp}(\mu))=\infty$. By Theorem 4.1 of \cite{GL:00}, we have
\[
e_{h+1,r}(\mu)<e_{h,r}(\mu)\;\;{\rm for\;all}\;\;h\geq 1.
\]
\item[(ii)] Let $h\in\mathbb{N}$ and $\sigma\in\Omega^*$. For $\gamma\in C_{h,r}(\mu(\cdot|J_\sigma))$, by the similarity of $g_\sigma$ and Lemma 3.2 in \cite{GL:00}, we have $g_\sigma^{-1}(\gamma)\in C_{h,r}(\nu_\sigma)$. Hence,
\begin{equation*}
I_\sigma(\gamma,\mu)=\int_{J_\sigma}d(x,\gamma)^rd\mu(x)=\mathcal{E}(\sigma)\int d(x,g_\sigma^{-1}(\gamma))^rd\nu_\sigma(x)=\mathcal{E}(\sigma)e^r_{h,r}(\nu_\sigma).
\end{equation*}
\item[(iii)] Let $\emptyset\neq\beta\subset\mathbb{R}^1$ be a finite set. For $\sigma\in\Lambda_{k,r}$, we set $\widetilde{N}_\sigma:={\rm card}(\beta\cap J_\sigma)$ and
    \[
    \gamma:=(\beta\cap J_\sigma)\cup\{x_1(\sigma),x_2(\sigma)\}.
    \]
Then we have $d(x,\gamma)\leq d(x,\beta)$ for all $x\in J_\sigma$. If $\widetilde{N}_\sigma\geq 1$, then we have
\begin{equation*}
I_\sigma(\beta,\mu)\geq I_\sigma(\gamma,\mu)=\mathcal{E}(\sigma)\int d(x,g_\sigma^{-1}(\gamma))^rd\nu_\sigma(x)\geq\mathcal{E}(\sigma)e^r_{\widetilde{N}_\sigma+2,r}(\nu_\sigma).
\end{equation*}
If $\widetilde{N}_\sigma=0$, then we have $\beta\subset J_\sigma^c$. By Lemma \ref{pre01} (1), we have
\[
I_\sigma(\beta,\mu)\geq\mathcal{E}(\sigma)e^r_{M_1-3,r}(\nu_\sigma)>\mathcal{E}(\sigma)e^r_{M_1+1,r}(\nu_\sigma).
\]
\end{enumerate}
\end{remark}

Let $n$ and $k$ satisfy (\ref{n}). We fix an arbitrary $\alpha_n\in C_{n,r}(\mu)$ and write
\[
L_\sigma:={\rm card}(\alpha_n\cap J_\sigma),\;\sigma\in\Lambda_{k,r}.
\]
With the next lemma, we give a lower estimate for $L_\sigma$. That is,
\begin{lemma}\label{pre02}
For every $\sigma\in\Lambda_{k,r}$, we have $L_\sigma\geq M_1$.
\end{lemma}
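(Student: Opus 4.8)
\textbf{Proof proposal for Lemma \ref{pre02}.}

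The plan is to argue by contradiction: suppose some $\sigma_0\in\Lambda_{k,r}$ has $L_{\sigma_0}\leq M_1-1$, and produce a competitor set $\beta$ with $\mathrm{card}(\beta)\leq n$ and $I(\beta,\mu)<I(\alpha_n,\mu)=e^r_{n,r}(\mu)$, contradicting optimality of $\alpha_n$. The competitor will be built by removing from $\alpha_n$ the (too few) points sitting in $J_{\sigma_0}$, and reinjecting a comparable number of points into $J_{\sigma_0}$ in a near-optimal configuration for $\mu(\cdot|J_{\sigma_0})$ — but crucially, using the two endpoints $x_1(\sigma_0),x_2(\sigma_0)$ to shield the complement $J_{\sigma_0}^c$, so that the rest of $\alpha_n$ is not disturbed. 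Concretely, set $\beta:=(\alpha_n\setminus J_{\sigma_0})\cup\{x_1(\sigma_0),x_2(\sigma_0)\}\cup g_{\sigma_0}(\gamma)$, where $\gamma\in C_{h,r}(\nu_{\sigma_0})$ for a suitably chosen $h$; one chooses $h$ so that $\mathrm{card}(\beta)\leq n$, which is possible precisely because $L_{\sigma_0}$ is small while — by the definition \eqref{n} and the uniform comparability \eqref{lambdakrcompare} of $\Lambda_{k,r}$ — most cylinders in $\Lambda_{k,r}$ must carry a bounded number of points, so the average count is $\asymp M_2+2$ and removing a deficient cylinder frees up room.

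The gain on $J_{\sigma_0}$: by Remark \ref{remgl}(iii), since $L_{\sigma_0}\leq M_1-1$ we have the lower bound $I_{\sigma_0}(\alpha_n,\mu)\geq\mathcal{E}(\sigma_0)e^r_{M_1+1,r}(\nu_{\sigma_0})$ (from the $\widetilde N_\sigma\geq 1$ case applied with $\widetilde N_{\sigma_0}=L_{\sigma_0}\leq M_1-1$, giving index $\leq M_1+1$; the $\widetilde N_\sigma=0$ case handles $L_{\sigma_0}=0$ and gives an even larger bound). After the surgery, by Remark \ref{remgl}(ii) the contribution of $J_{\sigma_0}$ to $I(\beta,\mu)$ is exactly $\mathcal{E}(\sigma_0)e^r_{h,r}(\nu_{\sigma_0})$. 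The loss off $J_{\sigma_0}$: adding $g_{\sigma_0}(\gamma)$ and the two endpoints only helps on $J_{\sigma_0}^c$, while deleting the $\leq M_1-1$ points of $\alpha_n$ in $J_{\sigma_0}$ could hurt $J_{\sigma_0}^c$ — but those deleted points lie in $J_{\sigma_0}$, and any $x\in J_{\sigma_0}^c$ is separated from them by an endpoint $x_i(\sigma_0)\in\beta$, so in fact $d(x,\beta)\leq d(x,\alpha_n)$ for \emph{all} $x\in J_{\sigma_0}^c$ as well. Hence the only change is on $J_{\sigma_0}$, and it suffices to choose $h$ with $\mathcal{E}(\sigma_0)e^r_{h,r}(\nu_{\sigma_0})<\mathcal{E}(\sigma_0)e^r_{M_1+1,r}(\nu_{\sigma_0})$, i.e. $h\geq M_1+2$ (using Remark \ref{remgl}(i) that the quantization error of the non-atomic $\nu_{\sigma_0}$ is strictly decreasing — one should note $\mathrm{card}(\mathrm{supp}\,\nu_{\sigma_0})=\infty$ by \eqref{assume}). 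Since $h=M_1+2$ is a small constant and the freed-up room from the deficient cylinder is $\gtrsim (M_2+2)-(M_1-1)>0$ (recall $M_2>M_1+4$), such an $h$ is admissible, giving $\mathrm{card}(\beta)\leq n$ and $I(\beta,\mu)<e^r_{n,r}(\mu)$ — a contradiction.

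The main obstacle is the bookkeeping that guarantees $\mathrm{card}(\beta)\leq n$: one must show that a deficient cylinder $\sigma_0$ genuinely creates slack, which requires knowing that $\sum_{\sigma\in\Lambda_{k,r}}L_\sigma$ together with the points of $\alpha_n$ not in any $J_\sigma$, $\sigma\in\Lambda_{k,r}$, totals $n$, and that $\phi_{k,r}$ cylinders sharing $n\geq(M_2+2)\phi_{k,r}$ points cannot all be deficient without wasting budget — the pigeonhole here is clean but must be combined with the endpoint trick so that reinjecting into one cylinder does not require touching another. A secondary subtlety is the case $L_{\sigma_0}=0$, where $\beta$ gains two new points on what was an empty cylinder; here one checks $M_1\geq 4$ leaves ample room and Remark \ref{remgl}(iii) still delivers the needed strict decrease. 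Everything else is a direct assembly of Remark \ref{remgl}(i)--(iii) and Lemma \ref{pre01}(1).
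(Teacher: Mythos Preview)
Your construction has a genuine budget gap. With $\beta=(\alpha_n\setminus J_{\sigma_0})\cup\{x_1(\sigma_0),x_2(\sigma_0)\}\cup g_{\sigma_0}(\gamma)$ and $\gamma\in C_{h,r}(\nu_{\sigma_0})$, the cardinality count is
\[
\mathrm{card}(\beta)\leq n-L_{\sigma_0}+2+h,
\]
so $\mathrm{card}(\beta)\leq n$ forces $h\leq L_{\sigma_0}-2\leq M_1-3$. But your strict-decrease argument on $J_{\sigma_0}$ needs $e^r_{h,r}(\nu_{\sigma_0})<e^r_{M_1+1,r}(\nu_{\sigma_0})$, i.e.\ $h\geq M_1+2$. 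These two requirements are incompatible, so no admissible $h$ exists. The sentence ``the freed-up room from the deficient cylinder is $\gtrsim(M_2+2)-(M_1-1)$'' is the point where the reasoning slips: removing points from a \emph{deficient} cylinder frees up at most $L_{\sigma_0}\leq M_1-1$ points, not $(M_2+2)-(M_1-1)$. The latter quantity is the gap between the average count and the deficient count; it tells you only that some \emph{other} cylinder must be overfull, not that your one-cylinder surgery has room.

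The paper fixes exactly this by touching a second cylinder. One first shows (as you essentially note) that at most $2\phi_{k,r}$ points of $\alpha_n$ can lie outside $\bigcup_{\sigma\in\Lambda_{k,r}}J_\sigma$, so by pigeonhole some $\omega\in\Lambda_{k,r}$ has $L_\omega>M_2$. The competitor is then built by thinning $J_\omega$ (replacing $\alpha_n\cap J_\omega$ by an $(L_\omega-M_1-4)$-optimal set for $\mu(\cdot|J_\omega)$ together with the two endpoints of $J_\omega$) and spending the $M_1+2$ freed points on an $(M_1+2)$-optimal set inside $J_{\sigma_0}$. The endpoint shielding now has to be applied at $J_\omega$ (and the added points in $J_{\sigma_0}$ only help elsewhere), so off $J_{\sigma_0}\cup J_\omega$ nothing worsens. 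The gain on $\sigma_0$ is bounded below exactly as you wrote; the new ingredient is that the \emph{loss} on $\omega$, namely $\mathcal{E}(\omega)\bigl(e^r_{L_\omega-M_1-4,r}(\nu_\omega)-e^r_{L_\omega+2,r}(\nu_\omega)\bigr)$, is controlled by Lemma~\ref{pre01}(2) (this is where $M_2$ is actually used) and by \eqref{lambdakrcompare} to be strictly smaller than the gain on $\sigma_0$. Your proposal never invokes Lemma~\ref{pre01}(2), which is the piece that makes the two-cylinder trade profitable.
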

\begin{proof}
 Note that we have altogether $\phi_{k,r}$ intervals $J_\sigma$ with $\sigma\in\Lambda_{k,r}$. Write
 \[
 x_1:=\min\{x_1(\sigma): \sigma\in\Lambda_{k,r}\},\;x_2:=\max\{x_2(\sigma): \sigma\in\Lambda_{k,r}\}.
 \]
 The complement of $\bigcup_{\sigma\in\Lambda_{k,r}}J_\sigma$ equals the union of $(-\infty,x_1),(x_2,+\infty)$ and all the possible open intervals between two neighboring cylinders. In each of these open intervals, there are at most three points of $\alpha_n$, otherwise the point in the middle would be redundant, which contradicts the optimality of $\alpha_n$. Also, one can see
 \[
 \alpha\cap(-\infty,x_1)=\emptyset,\;\;\alpha\cap(x_2,+\infty)=\emptyset;
 \]
 otherwise, we may replace $\alpha\cap(-\infty,x_1)$ with $\{x_1\}$, or replace $\alpha\cap(x_2,+\infty)$ with $\{x_2\}$, and get a contradiction.
From the above analysis, we obtain
\[
{\rm card}\bigg(\alpha_n\setminus \bigcup_{\sigma\in\Lambda_{k,r}}J_\sigma\bigg)\leq 2\phi_{k,r}.
\]
Using this and (\ref{n}), we deduce
\begin{eqnarray}\label{pre1}
{\rm card}\bigg(\alpha_n\cap \bigcup_{\sigma\in\Lambda_{k,r}}J_\sigma\bigg)\geq(M_2+2)\phi_{k,r}-2\phi_{k,r}=M_2\phi_{k,r}.
\end{eqnarray}

Suppose $L_\sigma<M_1$ for some $\sigma\in\Lambda_{k,r}$. We deduce a contradiction. By (\ref{pre1}), there exists some $\omega\in\Lambda_{k,r}$ such that
$L_\omega>M_2$. Define
\begin{eqnarray*}
&&\gamma_{L_\omega-M_1-4}(\omega)\in C_{L_\omega-M_1-4,r}(\mu(\cdot|J_\omega)),\;\gamma_{M_1+2}(\sigma)\in C_{M_1+2,r}(\mu(\cdot|J_\sigma));
\\
&&\beta:=(\alpha_n\setminus J_\omega)\cup \gamma_{L_\omega-M_1-4}(\omega)\cup\{x_1(\omega),x_2(\omega)\}\cup \gamma_{M_1+2}(\sigma).
\end{eqnarray*}
Then we have ${\rm card}(\beta)\leq{\rm card}(\alpha)$. For every $\tau\in\Lambda_{k,r}$ with $\tau\neq\sigma,\omega$ and every $x\in J_\tau\setminus (J_\sigma\cup J_\omega)$, we have
$d(x,\beta)\leq d(x,\alpha_n)$. Note that $J_\omega,J_\sigma$ and $J_\tau$ are non-overlapping and $\mu$ is non-atomic. We have $\mu(J_\tau\cap J_\sigma)=0$. It follows that
\begin{eqnarray}\label{t0}
\sum_{\tau\in\Lambda_{k,r},\tau\neq\sigma,\omega}I_\tau(\beta,\mu)\leq\sum_{\tau\in\Lambda_{k,r},\tau\neq\sigma,\omega}I_\tau(\alpha_n,\mu).
\end{eqnarray}
By the supposition, $L_\sigma<M_1$. Then by Remark \ref{remgl} (i) and (iii), we have
\[
I_\sigma(\alpha_n,\mu)\geq \mathcal{E}(\sigma)\max\{e^r_{L_\sigma+2,r}(\nu_\sigma),e^r_{M_1+1,r}(\nu_\sigma)\}\geq \mathcal{E}(\sigma)e^r_{M_1+1,r}(\nu_\sigma).
\]
This, together with the definition of $\beta$, yields
\begin{eqnarray}\label{t1}
I_\sigma(\alpha_n,\mu)-I_\sigma(\beta,\mu)\geq \mathcal{E}(\sigma)(e^r_{M_1+1,r}(\nu_\sigma)-e^r_{M_1+2,r}(\nu_\sigma)).
\end{eqnarray}
Note that $L_\omega>M_2$. By Remark \ref{remgl}, we deduce
\[
I_\omega(\alpha_n,\mu)\geq \mathcal{E}(\omega)e^r_{L_\omega+2,r}(\nu_\omega).
\]
Using this, the definition of $\beta$, Lemma \ref{pre01} and (\ref{lambdakrcompare}), we deduce
\begin{eqnarray}\label{t2}
I_\omega(\beta,\mu)-I_\omega(\alpha_n,\mu)&\leq& \mathcal{E}(\omega)e^r_{L_\omega-M_1-4}(\nu_\omega)-\mathcal{E}(\omega)e^r_{L_\omega+2}(\nu_\omega)\nonumber\\
&<&\mathcal{E}(\omega)\eta_r\big(e^r_{M_1+1,r}(\nu_\sigma)-e^r_{M_1+2,r}(\nu_\sigma)\big)\nonumber\\
&<&\mathcal{E}(\sigma)\big(e^r_{M_1+1,r}(\nu_\sigma)-e^r_{M_1+2,r}(\nu_\sigma)\big).
\end{eqnarray}
By (\ref{t1}), (\ref{t2}), it follows that
\[
I_\omega(\beta,\mu)-I_\omega(\alpha_n,\mu)<I_\sigma(\alpha_n,\mu)-I_\sigma(\beta,\mu).
\]
Combining this with (\ref{t0}), we conclude that
\[
I(\beta,\mu)=\sum_{\tau\in\Lambda_{k,r}}I_\tau(\beta,\mu)<\sum_{\tau\in\Lambda_{k,r}}I_\tau(\alpha_n,\mu)=I(\alpha_n,\mu).
\]
This contradicts the optimality of $\alpha_n$ and the lemma follows.
\end{proof}

Next, we show that for each $\sigma\in\Lambda_{k,r}$ and $\omega\in\Psi_{|\sigma|,3}$, we have $\alpha_n\cap J_{\sigma\ast\omega}\neq\emptyset$. This will be used to give an upper estimate for $\overline{J}(\alpha_n,\mu)$.
\begin{lemma}\label{pre03}
For every $\sigma\in\Lambda_{k,r}$ and $\omega\in\Psi_{|\sigma|,3}$, we have $L_{\sigma\ast\omega}\geq 1$.
\end{lemma}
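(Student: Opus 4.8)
The plan is to argue by contradiction, in the same spirit as the proof of Lemma \ref{pre02}, but this time using Lemma \ref{pre01}~(1) to capture the cost of abandoning a grandchild cylinder $J_{\sigma\ast\omega}$. Suppose that for some $\sigma\in\Lambda_{k,r}$ and some $\omega\in\Psi_{|\sigma|,3}$ we have $L_{\sigma\ast\omega}=0$, i.e. $\alpha_n\cap J_{\sigma\ast\omega}=\emptyset$. Since $\alpha_n\subset J_{\sigma\ast\omega}^c$, Lemma \ref{pre01}~(1) gives the lower bound $I_{\sigma\ast\omega}(\alpha_n,\mu)>\mathcal{E}(\sigma)e^r_{M_1-3,r}(\nu_\sigma)$, whereas if we insert into $\alpha_n$ an optimal set $\gamma_{M_1+1}(\sigma\ast\omega)\in C_{M_1+1,r}(\mu(\cdot\,|J_{\sigma\ast\omega}))$ together with the two endpoints $x_1(\sigma),x_2(\sigma)$ of $J_\sigma$, the integral over $J_{\sigma\ast\omega}$ drops to at most $\mathcal{E}(\sigma)e^r_{M_1+1,r}(\nu_\sigma)$ via Remark \ref{remgl}~(ii). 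Hence the net gain over $J_{\sigma\ast\omega}$ is bounded below by $\mathcal{E}(\sigma)\bigl(e^r_{M_1-3,r}(\nu_\sigma)-e^r_{M_1+1,r}(\nu_\sigma)\bigr)$, which, by Remark \ref{remgl}~(i) (strict decrease of the quantization errors of $\nu_\sigma$, valid because ${\rm card}({\rm supp}(\nu_\sigma))=\infty$), is a strictly positive quantity.

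Next I would pay for the extra points. The set we remove from some heavily loaded cylinder must be chosen so that the loss there is smaller than the gain above. By the counting estimate \eqref{pre1}, ${\rm card}(\alpha_n\cap\bigcup_{\tau\in\Lambda_{k,r}}J_\tau)\geq M_2\phi_{k,r}$, so there is some $\omega'\in\Lambda_{k,r}$ with $L_{\omega'}>M_2$. Replace $\alpha_n\cap J_{\omega'}$ by an optimal set $\gamma_{L_{\omega'}-M_1-4}(\omega')\in C_{L_{\omega'}-M_1-4,r}(\mu(\cdot\,|J_{\omega'}))$ together with $\{x_1(\omega'),x_2(\omega')\}$; this frees up $M_1+2$ points, enough to supply the $M_1+1$ new points in $J_{\sigma\ast\omega}$ plus a spare, while the total cardinality does not increase. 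The cost incurred over $J_{\omega'}$ is at most $\mathcal{E}(\omega')\bigl(e^r_{L_{\omega'}-M_1-4,r}(\nu_{\omega'})-e^r_{L_{\omega'}+2,r}(\nu_{\omega'})\bigr)$, and since $L_{\omega'}>M_2$, part (2) of Lemma \ref{pre01} bounds this by $\mathcal{E}(\omega')\eta_r\bigl(e^r_{M_1+1,r}(\nu_\sigma)-e^r_{M_1+2,r}(\nu_\sigma)\bigr)$, which by \eqref{lambdakrcompare} (comparability of $\mathcal{E}(\sigma)$ and $\mathcal{E}(\omega')$ within $\Lambda_{k,r}$) is at most $\mathcal{E}(\sigma)\bigl(e^r_{M_1+1,r}(\nu_\sigma)-e^r_{M_1+2,r}(\nu_\sigma)\bigr)$. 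To conclude I need the gain over $J_{\sigma\ast\omega}$ to dominate this loss, i.e. that $e^r_{M_1-3,r}(\nu_\sigma)-e^r_{M_1+1,r}(\nu_\sigma)$ is not smaller than $e^r_{M_1+1,r}(\nu_\sigma)-e^r_{M_1+2,r}(\nu_\sigma)$ up to the relevant constants; this is where one must be slightly careful about which index gaps are being compared, and may require invoking Lemma \ref{pre01}~(2) (or a minor variant of its proof) with a different choice of indices, or re-choosing $M_1$ so that the first few differences of $e^r_{\cdot,r}(\nu_\sigma)$ dominate the tail differences uniformly in $\sigma$ — exactly the kind of uniform bound that Lemmas \ref{glanalogue} and \ref{zhu1} were set up to provide.

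As in Lemma \ref{pre02}, the contributions over all remaining cylinders $J_\tau$ with $\tau\in\Lambda_{k,r}$, $\tau\neq\sigma\ast\omega$'s ambient cylinder and $\tau\neq\omega'$, do not increase: for $x\in J_\tau$ outside the modified cylinders we have $d(x,\beta)\leq d(x,\alpha_n)$, and $\mu$ is non-atomic so the overlaps of the cylinder endpoints carry no mass; this gives $\sum_{\tau}I_\tau(\beta,\mu)\leq\sum_{\tau}I_\tau(\alpha_n,\mu)$ over those $\tau$. Also $\alpha_n$ has no points in $(-\infty,x_1)$ or $(x_2,+\infty)$ and at most three in any gap between neighbouring cylinders, exactly as before, so $\beta$ can be taken with ${\rm card}(\beta)\leq{\rm card}(\alpha_n)=n$. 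Summing the three groups of estimates yields $I(\beta,\mu)<I(\alpha_n,\mu)$, contradicting the optimality of $\alpha_n$, and the lemma follows. The main obstacle is the bookkeeping in the middle step: ensuring that the single net gain from reinstating $J_{\sigma\ast\omega}$ strictly outweighs the loss from thinning $J_{\omega'}$, which hinges on the uniform (over $\sigma\in\Omega^*$) control of the successive differences $e^r_{l,r}(\nu_\sigma)-e^r_{l+1,r}(\nu_\sigma)$ furnished by Lemmas \ref{glanalogue}, \ref{zhu1}, and the careful calibration of $M_1,M_2$ in Lemma \ref{pre01}.
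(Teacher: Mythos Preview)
Your approach is considerably more involved than the paper's, and it carries the gap you yourself flag. The paper's proof does \emph{not} borrow points from another cylinder $J_{\omega'}$ at all. The key observation you are missing is that Lemma~\ref{pre02} already guarantees $L_\sigma\geq M_1$: there are at least $M_1$ points of $\alpha_n$ inside $J_\sigma$, but by hypothesis they all avoid the sub-cylinder $J_{\sigma\ast\omega}$, so they are badly placed. The paper simply \emph{rearranges} those $L_\sigma$ points within $J_\sigma$: set
\[
\beta:=(\alpha_n\setminus J_\sigma)\cup\gamma_{L_\sigma-2}(\sigma)\cup\{x_1(\sigma),x_2(\sigma)\},\qquad \gamma_{L_\sigma-2}(\sigma)\in C_{L_\sigma-2,r}(\mu(\cdot|J_\sigma)).
\]
Then ${\rm card}(\beta)\leq n$, the endpoints keep every other cylinder from getting worse, and by Remark~\ref{remgl}(ii) one has $I_\sigma(\beta,\mu)\leq\mathcal{E}(\sigma)e^r_{L_\sigma-2,r}(\nu_\sigma)\leq\mathcal{E}(\sigma)e^r_{M_1-2,r}(\nu_\sigma)$, whereas the assumption $\alpha_n\subset J_{\sigma\ast\omega}^c$ and Lemma~\ref{pre01}(1) give $I_\sigma(\alpha_n,\mu)\geq I_{\sigma\ast\omega}(\alpha_n,\mu)>\mathcal{E}(\sigma)e^r_{M_1-3,r}(\nu_\sigma)$. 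Strict monotonicity (Remark~\ref{remgl}(i)) finishes it; no comparison between differences at different indices is needed, and Lemma~\ref{pre01}(2) is not invoked.

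In your route, two things go wrong. First, your bound ``the integral over $J_{\sigma\ast\omega}$ drops to at most $\mathcal{E}(\sigma)e^r_{M_1+1,r}(\nu_\sigma)$'' is not what Remark~\ref{remgl}(ii) gives: inserting $\gamma_{M_1+1}(\sigma\ast\omega)\in C_{M_1+1,r}(\mu(\cdot|J_{\sigma\ast\omega}))$ yields $I_{\sigma\ast\omega}(\beta,\mu)\leq\mathcal{E}(\sigma\ast\omega)e^r_{M_1+1,r}(\nu_{\sigma\ast\omega})$, which lives at a different scale. Second, and more seriously, you end up needing the gain $e^r_{M_1-3,r}(\nu_\sigma)-e^r_{M_1+1,r}(\nu_\sigma)$ to dominate the loss bound $e^r_{M_1+1,r}(\nu_\sigma)-e^r_{M_1+2,r}(\nu_\sigma)$ uniformly in $\sigma$; as you concede, nothing in the stated calibration of $M_1,M_2$ gives this directly, and patching it would require reopening Lemma~\ref{pre01}. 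The paper sidesteps both issues by keeping the entire modification local to $J_\sigma$.
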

\begin{proof}
Suppose that $L_{\sigma\ast\omega}=0$ for some $\sigma\in\Lambda_{k,r}$ and $\omega\in\Psi_{|\sigma|,3}$. We deduce a contradiction. By the supposition, we have
$\alpha_n\subset J_{\sigma\ast\omega}^c$. By Lemma \ref{pre01} (1),
\begin{eqnarray}\label{t5}
I_\sigma(\alpha_n,\mu)>\mathcal{E}(\sigma)e^r_{M_1-3,r}(\nu_\sigma).
\end{eqnarray}
On the other hand, by Lemma \ref{pre02}, we know that $L_\sigma\geq M_1$. We set
\[
\gamma_{L_\sigma-2}(\sigma)\in C_{L_\sigma-2,r}(\mu(\cdot|J_\sigma)),\;\beta:=(\alpha_n\setminus J_\sigma)\cup\gamma_{L_\sigma-2}(\sigma)\cup\{x_1(\sigma),x_2(\sigma)\}.
\]
Then we have ${\rm card}(\beta)\leq{\rm card}(\alpha_n)=n$. For every $\tau\in\Lambda_{k,r}\setminus\{\sigma\}$ and $x\in J_\tau\setminus J_\sigma$,
we have $d(x,\beta)\leq d(x,\alpha_n)$. Again, note that $\mu(J_\tau\cap J_\sigma)=0$. It follows that
\begin{eqnarray}\label{t3}
\sum_{\tau\in\Lambda_{k,r},\tau\neq\sigma}I_\tau(\beta,\mu)\leq\sum_{\tau\in\Lambda_{k,r},\tau\neq\sigma}I_\tau(\alpha_n,\mu).
\end{eqnarray}
For $x\in J_\sigma$, we have $d(x,\beta)\leq d(x,\gamma_{L_\sigma-2}(\sigma))$. Hence, by Remark \ref{remgl} (i), (ii),
\begin{eqnarray}\label{t4}
I_\sigma(\beta,\mu)&\leq&\int_{J_\sigma}d(x,\gamma_{L_\sigma-2}(\sigma))^rd\mu(x)\nonumber\\&=&p_\sigma\int_{J_\sigma} d(x,\gamma_{L_\sigma-2}(\sigma))^rd\nu_\sigma\circ g_\sigma^{-1}(x)\nonumber\\&=&p_\sigma c_\sigma^r\int  d(x,g_\sigma^{-1}(\gamma_{L_\sigma-2}(\sigma)))^rd\nu_\sigma(x)\nonumber\\&=&\mathcal{E}(\sigma)e^r_{L_\sigma-2,r}(\nu_\sigma)
\nonumber\\&\leq&\mathcal{E}(\sigma)e^r_{M_1-2,r}(\nu_\sigma),
\end{eqnarray}
where we also used the fact that $L_\sigma\geq M_1$. By (\ref{t5})-(\ref{t4}), we deduce
\[
I(\beta,\mu)=\sum_{\tau\in\Lambda_{k,r}}I_\tau(\beta,\mu)<\sum_{\tau\in\Lambda_{k,r}}I_\tau(\alpha_n,\mu)=I(\alpha_n,\mu).
\]
This contradicts the optimality of $\alpha_n$ and the lemma follows.
\end{proof}

With the next lemma, we establish an upper estimate for $L_\sigma$ with $\sigma\in\Lambda_{k,r}$. This will be used to establish a lower estimate for $\underline{J}(\alpha_n,\mu)$.
\begin{lemma}\label{pre04}
For every $\sigma\in\Lambda_{k,r}$, we have $L_\sigma\leq M_3$.
\end{lemma}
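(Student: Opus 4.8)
The plan is to mirror the strategy of Lemma \ref{pre02}, but now deriving a contradiction from the assumption that some cylinder $J_\sigma$ with $\sigma\in\Lambda_{k,r}$ contains \emph{too many} points of $\alpha_n$. Suppose $L_\sigma>M_3$ for some $\sigma\in\Lambda_{k,r}$. The first step is a counting argument: since $\sum_{\tau\in\Lambda_{k,r}}L_\tau\leq n<(M_2+2)\phi_{k+1,r}\leq (M_2+2)M_0\phi_{k,r}$ by (\ref{n}) and (\ref{lambdacard}), and there are $\phi_{k,r}$ words in $\Lambda_{k,r}$, an averaging argument shows that there must exist some $\omega\in\Lambda_{k,r}$ (and in fact one can arrange $\omega\neq\sigma$, since $L_\sigma$ is large) with $L_\omega$ at most roughly $M_0(M_2+2)$ — more precisely, one finds a word $\omega$ with $L_\omega\leq M_0(M_2+2)+c$ for a small absolute constant $c$ coming from the at-most-two stray points outside $\bigcup J_\tau$ near each $J_\tau$, exactly as in the derivation of (\ref{pre1}). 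Call this bound $M_0(M_2+2)+2$ or similar; this is why $M_3$ was chosen larger than $M_0(M_2+2)+7$.

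Next I would construct a competitor $\beta$ that moves points from the over-populated $J_\sigma$ to the under-populated $J_\omega$. Concretely, set
\[
\gamma_{L_\sigma-7}(\sigma)\in C_{L_\sigma-7,r}(\mu(\cdot|J_\sigma)),\quad \gamma_{L_\omega+5}(\omega)\in C_{L_\omega+5,r}(\mu(\cdot|J_\omega)),
\]
and put
\[
\beta:=(\alpha_n\setminus(J_\sigma\cup J_\omega))\cup\gamma_{L_\sigma-7}(\sigma)\cup\gamma_{L_\omega+5}(\omega)\cup\{x_1(\sigma),x_2(\sigma),x_1(\omega),x_2(\omega)\},
\]
so that ${\rm card}(\beta)\leq{\rm card}(\alpha_n)=n$ (we gave up $7$ points in $J_\sigma$, added $5$ in $J_\omega$, plus $4$ endpoints; adjust the bookkeeping so the net change is $\leq 0$). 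Because the endpoints of $J_\sigma$ and $J_\omega$ have been added, for every other $\tau\in\Lambda_{k,r}$ and every $x\in J_\tau\setminus(J_\sigma\cup J_\omega)$ we have $d(x,\beta)\leq d(x,\alpha_n)$, hence $\sum_{\tau\neq\sigma,\omega}I_\tau(\beta,\mu)\leq\sum_{\tau\neq\sigma,\omega}I_\tau(\alpha_n,\mu)$, using $\mu(J_\tau\cap J_\sigma)=\mu(J_\tau\cap J_\omega)=0$ as before. On $J_\sigma$, Remark \ref{remgl}(ii) gives $I_\sigma(\beta,\mu)\leq\mathcal{E}(\sigma)e^r_{L_\sigma-7,r}(\nu_\sigma)$, while Remark \ref{remgl}(iii) gives $I_\sigma(\alpha_n,\mu)\geq\mathcal{E}(\sigma)e^r_{L_\sigma+2,r}(\nu_\sigma)$, so the gain is at least $\mathcal{E}(\sigma)(e^r_{L_\sigma-7,r}(\nu_\sigma)-e^r_{L_\sigma+2,r}(\nu_\sigma))$; since $L_\sigma>M_3$, part (3) of Lemma \ref{pre01} bounds this below by $\mathcal{E}(\sigma)\eta_r\min_{1\leq h\leq M_0(M_2+4)+2}(e^r_{h,r}(\nu_\omega)-e^r_{h+1,r}(\nu_\omega))$.

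The loss on $J_\omega$ is controlled from above by the difference $\mathcal{E}(\omega)(e^r_{L_\omega,r}(\nu_\omega)-e^r_{L_\omega+5,r}(\nu_\omega))$, which telescopes into a sum of at most five consecutive one-step differences $e^r_{h,r}(\nu_\omega)-e^r_{h+1,r}(\nu_\omega)$ with $L_\omega\leq h\leq L_\omega+4\leq M_0(M_2+4)+2$; each such difference is therefore $\leq\max_{1\leq h\leq M_0(M_2+4)+2}(\cdots)$, and by Remark \ref{remgl}(i) these are positive so we may also bound them by a fixed multiple of the minimum after absorbing constants — but the cleaner route is to use Lemma \ref{glanalogue} to bound the loss by $5\cdot 3^r(L_\omega+1)^{-1}$, which is $\leq$ a small constant, against the lower bound for the gain on $J_\sigma$. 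Combining with $\mathcal{E}(\sigma)\asymp\mathcal{E}(\omega)$ from (\ref{lambdakrcompare}), the choice of $M_3$ is exactly what makes the gain strictly exceed the loss, giving $I(\beta,\mu)<I(\alpha_n,\mu)$ and contradicting optimality. The main obstacle is the bookkeeping: one must pin down the precise point counts (how many to remove from $J_\sigma$, how many to add to $J_\omega$, how the four endpoints are accounted for) and the precise index ranges so that both the cardinality constraint ${\rm card}(\beta)\leq n$ and the inequalities feeding into Lemma \ref{pre01}(3) hold simultaneously — this is why the constants in Lemma \ref{pre01}(3) and in the definition of $M_3$ were set up with the specific offsets $7$, $2$, and $M_0(M_2+4)+2$.
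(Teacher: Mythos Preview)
Your overall strategy matches the paper's: locate an under-populated cylinder $J_\omega$ by averaging, build a competitor $\beta$ by shifting points from the over-populated $J_\sigma$ to $J_\omega$ (adding the four endpoints), and conclude via Lemma~\ref{pre01}(3) together with (\ref{lambdakrcompare}). The gap is that you have the roles of gain and loss reversed throughout. Removing points from $J_\sigma$ can only \emph{raise} the error there; the two inequalities you quote yield the \emph{upper} bound
\[
I_\sigma(\beta,\mu)-I_\sigma(\alpha_n,\mu)\leq\mathcal{E}(\sigma)\bigl(e^r_{L_\sigma-7,r}(\nu_\sigma)-e^r_{L_\sigma+2,r}(\nu_\sigma)\bigr)
\]
on the \emph{loss}, and Lemma~\ref{pre01}(3) bounds this \emph{above} (not below) by $\eta_r\min_h(e^r_{h,r}(\nu_\omega)-e^r_{h+1,r}(\nu_\omega))$. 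Symmetrically, adding points to $J_\omega$ produces the \emph{gain}, for which one needs a \emph{lower} bound; the paper takes $\gamma_{L_\omega+3}(\omega)$ and uses the single one-step drop $\mathcal{E}(\omega)(e^r_{L_\omega+2,r}(\nu_\omega)-e^r_{L_\omega+3,r}(\nu_\omega))$, which is one of the terms in the minimum since $L_\omega+2\leq M_0(M_2+4)+2$. Your detour through Lemma~\ref{glanalogue} on the $\omega$-side points the wrong way: that lemma gives \emph{upper} bounds on such drops, whereas a lower bound is what is required.

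There is also a bookkeeping slip: your choice $\gamma_{L_\omega+5}(\omega)$ gives a net change of $-7+5+4=+2>0$, so ${\rm card}(\beta)\leq n$ fails. The paper uses $\gamma_{L_\omega+3}(\omega)$, giving $-7+3+4=0$. Once you swap the gain/loss roles and correct the count to $L_\omega+3$, your argument becomes identical to the paper's proof.
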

\begin{proof}
Suppose that $L_\sigma>M_3$ for some $\sigma\in\Lambda_{k,r}$. We give the proof by deducing a contradiction. By (\ref{n}) and (\ref{lambdacard}), we have
\[
n<(2+M_2)\phi_{k+1,r}\leq(2+M_2)M_0\phi_{k,r}.
\]
Thus, there exists some $\tau\in\Lambda_{k,r}$ with $L_\tau<(4+M_2)M_0$. Define
\[
\beta:=(\alpha_n\setminus (J_\sigma\cup J_\tau))\cup\{x_1(\sigma),x_2(\sigma)\}\cup\{x_1(\tau),x_2(\tau)\}\cup\gamma_{L_\sigma-7}(\sigma)\cup\gamma_{L_\tau+3}(\tau).
\]
Then we have ${\rm card}(\beta)\leq{\rm card}(\alpha_n)=n$. For every $\tau\in\Lambda_{k,r}\setminus\{\sigma,\tau\}$ and $x\in J_\tau\setminus (J_\sigma\cup J_\tau)$, we have $d(x,\beta)\leq d(x,\alpha_n)$. It follows that
\begin{eqnarray}\label{t6}
\sum_{\rho\in\Lambda_{k,r},\rho\neq\sigma,\tau}I_\tau(\beta,\mu)\leq\sum_{\rho\in\Lambda_{k,r},\rho\neq\sigma,\tau}I_\tau(\alpha_n,\mu).
\end{eqnarray}
For $x\in J_\sigma$, we have $d(x,\beta)\leq d(x,\gamma_{L_\sigma-7}(\sigma))$. By Remark \ref{remgl} (ii),(iii), we deduce
\begin{eqnarray}\label{t7}
I_\sigma(\beta,\mu)-I_\sigma(\alpha_n,\mu)&\leq&\int_{J_\sigma}d(x,\gamma_{L_\sigma-7}(\sigma))^rd\mu(x)-I_\sigma(\alpha_n,\mu)\nonumber\\
&\leq&\mathcal{E}(\sigma)(e^r_{L_\sigma-7,r}(\nu_\sigma)-e^r_{L_\sigma+2,r}(\nu_\sigma)).
\end{eqnarray}
For $x\in J_\tau$, we have $d(x,\beta)\leq d(x,\gamma_{L_\tau+3}(\tau))$. By Remark \ref{remgl} (ii) and (iii), we have
\begin{eqnarray}\label{t8}
I_\tau(\alpha_n,\mu)-I_\tau(\beta,\mu)&\geq&\mathcal{E}(\tau)e^r_{L_\tau+2,r}(\nu_\tau)-\int_{J_\tau}d(x,\gamma_{L_\tau+3}(\tau))^rd\mu(x)\nonumber\\
&=&\mathcal{E}(\tau)(e^r_{L_\tau+2,r}(\nu_\tau)-e^r_{L_\tau+3,r}(\nu_\tau)).
\end{eqnarray}
Combining (\ref{t6})-(\ref{t8}), (\ref{lambdakrcompare}) and Lemma \ref{pre01} (3), we have, $I(\beta,\mu)<I(\alpha_n,\mu)$. This contradicts the optimality of $\alpha_n$ and the proof of the lemma is complete.
\end{proof}

\section{Proof of Theorem \ref{mthm}}

 Let $n$ and $k$ satisfy (\ref{n}) and $\alpha_n\in C_{n,r}(\mu)$. By Lemma \ref{pre03}, for every $\sigma\in\Lambda_{k,r}$ and $\omega\in\Psi_{|\sigma|,3}$, we have $\alpha_n\cap J_{\sigma\ast\omega}\neq\emptyset$. This implies that, for every $a\in\alpha_n$, we have
\begin{eqnarray}\label{sa}
S_a:={\rm card}(\{\sigma\in\Lambda_{k,r}:P_a(\alpha_n)\cap J_\sigma\cap E\neq\emptyset\})\leq 2.
\end{eqnarray}
For every $\rho\in\Lambda_{k,r}$, we write
\begin{eqnarray}
\xi_1(\rho):=\min_{b\in\alpha_n\cap J_\rho}b,\;\;\xi_2(\rho):=\min_{b\in\alpha_n\cap J_\rho,b>\xi_1(\rho)}b;\label{tem03}
\\ \zeta_1(\rho):=\max_{b\in\alpha_n\cap J_\rho}b,\;\;\zeta_2(\rho):=\max_{b\in\alpha_n\cap J_\rho,b<\zeta_1(\rho)}b.\label{tem04}
\end{eqnarray}
Note that $M_1\geq 4$, by Lemmas \ref{pre02} and \ref{pre03}, we have
\[
\xi_1(\rho)<\xi_2(\rho)<\zeta_2(\rho)<\zeta_1(\rho).
\]

Next, we define a set $G_a$ and an auxiliary measure $\lambda_a$ for each $a\in\alpha_n$. We distinguish two cases according to the value of $S_a$.

Case (I): $S_a=2$. In this case, we have $P_a(\alpha_n)\cap E\subset J_\sigma\cup J_\tau$ for some neighboring cylinders $J_\sigma,J_\tau$ with $\sigma,\tau\in\Lambda_{k,r}$. As above, we denote by $x_1(\rho),x_2(\rho)$ the left and right endpoint of the interval $J_\rho$. Without loss of generality, we assume that
\begin{eqnarray}\label{wlg1}
x_2(\sigma)<x_1(\tau),\;\;p_\sigma\geq p_\tau.
\end{eqnarray}
By (\ref{wlg1}), on can see that $\xi_2(\sigma)\leq a\leq\zeta_2(\tau)$. Define
\[
G_a=\bigcup_{b\in\alpha_n\cap[\xi_2(\sigma),\zeta_2(\tau)]}P_b(\alpha_n).
\]
Let $c(\sigma)$ be the midpoint of the interval $[\xi_1(\sigma),\xi_2(\sigma)]$ and $d(\tau)$ the midpoint of $[\zeta_2(\tau),\zeta_1(\tau)]$. Then we have $P_a(\alpha_n)\subset G_a\subset [c(\sigma),d(\tau)]$. By Lemmas \ref{pre02} and \ref{pre04},
\[
2M_1-2\leq T_a:={\rm card}(\alpha_n\cap[\xi_2(\sigma),\zeta_2(\tau)])={\rm card}(\alpha_n\cap G_a)\leq 2M_3.
\]
Let $g_\sigma$ be an arbitrary similitude of similarity ratio $c_\sigma$ on $\mathbb{R}^1$. We define
\begin{eqnarray}\label{z1}
\lambda_a:=\mu(\cdot|G_a)\circ g_\sigma,\; {\rm implying}\;\;\mu(\cdot|G_a)=\lambda_a\circ g_\sigma^{-1}.
\end{eqnarray}
Let $K_a$ denote the support of $\lambda_a$. Then $K_a\subset g_\sigma^{-1}(G_a)$.

Case (II): $S_a=1$. In this case, $P_a(\alpha_n)\cap E\subset J_\sigma$ for some $\sigma\in\Lambda_{k,r}$.  Write
\begin{eqnarray*}
w_1(\sigma):=\min\{a,\xi_2(\sigma)\},\;w_2(\sigma):=\max\{a,\zeta_2(\sigma)\}.
\end{eqnarray*}
Then we have $w_1(\sigma)<w_2(\sigma)$ and $w_1(\sigma)\leq a\leq w_2(\sigma)$. Now we define
\begin{equation}\label{ga2}
G_a:=\bigcup_{b\in\alpha_n\cap[w_1(\sigma),w_2(\sigma)]}P_b(\alpha_n).
\end{equation}
As above, let $g_\sigma$ be a similitude on $\mathbb{R}^1$ with similarity ratio $c_\sigma$. We define
$\lambda_a:=\mu(\cdot|G_a)\circ g_\sigma$ and $K_a:={\rm supp}(\lambda_a)$.
By Lemmas \ref{pre02} and \ref{pre04},
\[
M_1-1\leq T_a:={\rm card}(\alpha_n\cap[w_1(\sigma),w_2(\sigma)])={\rm card}(\alpha_n\cap G_a)\leq M_3.
\]

\begin{remark}\label{zhu3}
The set $G_a$ is the union of at most $2M_3$ elements of the Voronoi partition $\{P_b(\alpha_n)\}_{b\in\alpha_n}$. By Theorem 4.1 in \cite{GL:00}, $\alpha_n\cap G_a\in C_{T_a,r}(\mu(\cdot|G_a))$.
\end{remark}

 In the following, we will assume that $a\in\alpha_n$ and focus on the case that $S_a=2$; the case when $S_a=1$ is relatively much simpler and will be treated as a degenerate case of the former. Assume that $S_a=2$ and (\ref{wlg1}) is satisfied. We set
\begin{equation}\label{tem01}
G_{a,\sigma}:=G_a\cap J_\sigma;\;\;G_{a,\tau}=G_a\cap J_\tau.
\end{equation}
Then we have $\mu(G_{a,\sigma}\cap G_{a,\tau})=0$. Hence,
\begin{equation}\label{s1}
\mu(G_a)=\mu(G_{a,\sigma})+\mu(G_{a,\tau}).
\end{equation}

Besides \cite[Theorem 4.1]{GL:00}, the next lemma will be crucial for our lower estimate for $\underline{J}(\alpha_n,\mu)$. It is a variation of Proposition 12.12 in \cite{GL:00}.
\begin{lemma}\label{pre07}(see \cite[Proposition 12.12]{GL:00})
Let $\nu$ be a Borel probability measure on $\mathbb{R}^q$ with compact support $K_\nu$. Assume that there exist constants $s$ and $\widetilde{C}$, such that
\begin{eqnarray*}\label{s5}
\sup_{x\in\mathbb{R}^q}\nu(B(x,\epsilon))\leq\widetilde{C}\epsilon^s\;{\rm for\;all}\;\;\epsilon>0.
\end{eqnarray*}
then for a Borel set $B\subset\mathbb{R}^q$ and any $b\in\mathbb{R}^q$, we have
\begin{eqnarray*}\label{s6}
\int_Bd(x,b)^rd\mu(x)\geq 2^{-(1+\frac{r}{s})}\widetilde{C}^{-\frac{r}{s}} \nu(B)^{1+\frac{r}{s}}.
\end{eqnarray*}
\end{lemma}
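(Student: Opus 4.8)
The plan is to reduce the statement to the case where $\nu(B)>0$ and to estimate $\int_B d(x,b)^r\,d\nu(x)$ from below by discarding the part of $B$ that is too close to $b$. First I would fix $b\in\mathbb{R}^q$ and, for a radius $\rho>0$ to be optimized, split $B=(B\cap B(b,\rho))\cup(B\setminus B(b,\rho))$. On the outer part we have the trivial bound $d(x,b)^r\geq\rho^r$, so
\[
\int_B d(x,b)^r\,d\nu(x)\geq\rho^r\,\nu\bigl(B\setminus B(b,\rho)\bigr)\geq\rho^r\bigl(\nu(B)-\nu(B(b,\rho))\bigr).
\]
The hypothesis $\sup_x\nu(B(x,\epsilon))\leq\widetilde{C}\epsilon^s$ gives $\nu(B(b,\rho))\leq\widetilde{C}\rho^s$, so the right-hand side is at least $\rho^r\bigl(\nu(B)-\widetilde{C}\rho^s\bigr)$.

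Next I would choose $\rho$ so that the parenthetical term is a fixed fraction of $\nu(B)$: taking $\widetilde{C}\rho^s=\tfrac12\nu(B)$, i.e. $\rho=\bigl(\nu(B)/(2\widetilde{C})\bigr)^{1/s}$, yields
\[
\int_B d(x,b)^r\,d\nu(x)\geq\frac12\,\nu(B)\,\Bigl(\frac{\nu(B)}{2\widetilde{C}}\Bigr)^{r/s}
=2^{-(1+\frac{r}{s})}\widetilde{C}^{-\frac{r}{s}}\,\nu(B)^{1+\frac{r}{s}},
\]
which is exactly the asserted inequality. The case $\nu(B)=0$ is trivial since the right-hand side vanishes, and one should note that $\rho$ is well defined and finite because $\nu(B)\leq 1<\infty$; compactness of $K_\nu$ is not actually needed for this step but is harmless. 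This mirrors the proof of \cite[Proposition 12.12]{GL:00}.

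There is no serious obstacle here; the only point requiring a moment's care is that the bound on $\nu(B(b,\rho))$ must be applied with the \emph{same} constants $s,\widetilde{C}$ that appear in the hypothesis, and that the choice of $\rho$ does not depend on $B$ beyond its measure, so the constant in the conclusion is universal. One small subtlety worth recording is that the inequality $d(x,b)^r\geq\rho^r$ on $B\setminus B(b,\rho)$ uses $d(x,b)\geq\rho$ there, which holds by definition of the ball $B(b,\rho)$; depending on whether one uses open or closed balls the boundary sphere may be split either way, but since a single sphere has $\nu$-measure at most $\widetilde{C}\rho^s$ and we already discard that much mass, the estimate is unaffected.
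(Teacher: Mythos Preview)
Your proof is correct and is precisely the standard argument for this type of estimate; the paper itself does not spell out a proof but simply says ``It can be proved by the same argument as that for Proposition 12.12 of \cite{GL:00},'' and your splitting-and-optimizing argument is exactly that argument. There is nothing to add.
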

\begin{proof}
It can be proved by the same argument as that for Proposition 12.12 for \cite{GL:00}.
\end{proof}

By the following three lemmas, we explore the local properties of $\lambda_a$ for $a\in\alpha_n$. These properties will enable us to apply Lemma \ref{pre07}.

\begin{lemma}\label{pre05}
Let $t$ be the same as in Lemma \ref{local}. There exists a constant $C_1$ such that
$\sup_{x\in\mathbb{R}^1}\lambda_a(B(x,\epsilon))\leq C_1\epsilon^t$ for all $\epsilon>0$.
\end{lemma}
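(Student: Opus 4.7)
The plan is to unwind the definition $\lambda_a=\mu(\cdot|G_a)\circ g_\sigma$ and bound the resulting ratio
\[
\lambda_a(B(x,\epsilon))=\frac{\mu\bigl(B(g_\sigma(x),c_\sigma\epsilon)\cap G_a\bigr)}{\mu(G_a)}
\]
by handling numerator and denominator separately. The two workhorses will be Lemma \ref{local} (to bound $\nu_\rho$-mass of small balls) and Lemma \ref{pre03} (to locate $\alpha_n$-points in $J_\sigma$). A preliminary observation I use throughout is that (\ref{lambdakrcompare}) together with $\underline{p}\leq p_\rho\leq 1$ forces $c_\rho\asymp c_\sigma$ and $p_\rho\asymp p_\sigma$ for every $\rho\in\Lambda_{k,r}$, with constants depending only on $\underline{p},\overline{p},\underline{c},\overline{c}$ and $r$; this lets me absorb all $\rho$-dependence into $\sigma$-dependence.

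For the numerator I would split $\mu(B(g_\sigma(x),c_\sigma\epsilon))=\sum_{\rho\in\Lambda_{k,r}}\mu(B\cap J_\rho)$ and push each summand through $\mu(\cdot|J_\rho)=\nu_\rho\circ g_\rho^{-1}$, giving
\[
\mu(B\cap J_\rho)\leq p_\rho\,\nu_\rho\bigl(B(g_\rho^{-1}(g_\sigma(x)),c_\sigma\epsilon/c_\rho)\bigr).
\]
Lemma \ref{local} together with $c_\sigma/c_\rho\asymp 1$ bounds each $\nu_\rho$-mass by $C'\epsilon^t$, and $p_\rho\asymp p_\sigma$ replaces $p_\rho$ by a constant multiple of $p_\sigma$. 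Since $|J_\rho|\asymp c_\sigma$ matches the radius $c_\sigma\epsilon$ of the ball, only $O(1)$ indices $\rho$ contribute while $\epsilon$ stays below a fixed threshold; for $\epsilon$ above that threshold the conclusion follows trivially from $\lambda_a\leq 1$. Collecting, $\mu(B\cap G_a)\leq C_2\,p_\sigma\,\epsilon^t$.

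For the denominator I would exhibit a full sub-cylinder of $J_\sigma$ sitting inside $G_a$. Enumerate $\{J_{\sigma\ast\omega}:\omega\in\Psi_{|\sigma|,3}\}$ from left to right as $J_{\sigma\ast\omega_1},\ldots,J_{\sigma\ast\omega_N}$ with $N\geq 2^3=8$. Lemma \ref{pre03} places at least one $\alpha_n$-point in each, so $\xi_1(\sigma)\leq x_2(\sigma\ast\omega_1)$ and $\xi_2(\sigma)\leq x_2(\sigma\ast\omega_2)$, whence
\[
c(\sigma)=\tfrac{1}{2}\bigl(\xi_1(\sigma)+\xi_2(\sigma)\bigr)\leq x_2(\sigma\ast\omega_2)\leq x_1(\sigma\ast\omega_3).
\]
In Case (I), $G_a\cap J_\sigma=[c(\sigma),x_2(\sigma)]\supset J_{\sigma\ast\omega_3}$; in Case (II), the symmetric bound $\zeta_2(\sigma)\geq x_1(\sigma\ast\omega_{N-1})$ shows that $[c(\sigma),\tfrac12(\zeta_1(\sigma)+\zeta_2(\sigma))]\subset G_a$ still contains a middle cylinder from $\Psi_{|\sigma|,3}$. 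Either way, $\mu(G_a)\geq p_{\sigma\ast\omega_3}\geq\underline{p}^3\,p_\sigma$.

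Dividing yields $\lambda_a(B(x,\epsilon))\leq(C_2/\underline{p}^3)\,\epsilon^t$, with $C_1:=C_2/\underline{p}^3$ uniform in $a,n,\sigma$. The step I expect to be the main obstacle is the denominator bound: \emph{a priori} $c(\sigma)$ could sit arbitrarily close to $x_2(\sigma)$, leaving only a sliver of $J_\sigma$ inside $G_a$. Lemma \ref{pre03} is precisely what rules this out, by forcing the two smallest $\alpha_n$-points in $J_\sigma$ into the two leftmost sub-sub-sub cylinders and leaving a third one free to the right of $c(\sigma)$; the choice of depth $3$ in $\Psi_{|\sigma|,3}$, which guarantees at least eight such subcylinders, is what provides the necessary slack on both sides for Case (II).
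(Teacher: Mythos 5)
Your denominator bound is sound: showing that $G_a$ contains a full third-level subcylinder $J_{\sigma\ast\omega_3}$ via Lemma \ref{pre03}, hence $\mu(G_a)\geq\underline{p}^3p_\sigma$, is a legitimate (in fact cleaner) variant of the paper's estimate (\ref{s2}). The genuine gap is in the numerator, and it is caused by your ``preliminary observation.'' The claim that $c_\rho\asymp c_\sigma$ and $p_\rho\asymp p_\sigma$ uniformly for $\rho\in\Lambda_{k,r}$ is false: (\ref{lambdakrcompare}) only gives comparability of the \emph{products} $\mathcal{E}(\rho)=p_\rho c_\rho^r$, and your supporting inequality $\underline{p}\leq p_\rho$ confuses the single-level bound $p_{k,j}\geq\underline{p}$ with the cylinder mass $p_\rho=p_{1,\rho_1}\cdots p_{|\rho|,\rho_{|\rho|}}$, which tends to $0$ as $|\rho|\to\infty$. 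Words in $\Lambda_{k,r}$ can have very different lengths, and the paper explicitly warns (before Lemmas \ref{pre08} and \ref{pre06a}) that $c_\sigma^{-1}c_\tau$ may be arbitrarily large over pairs in $\Lambda_{k,r}$. Consequently both of the steps that rest on this claim fail as stated: the bound $\nu_\rho\bigl(B(\cdot,c_\sigma\epsilon/c_\rho)\bigr)\leq C'\epsilon^t$ and the assertion that only $O(1)$ cylinders $\rho$ meet the ball.

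What rescues the argument is not comparability over all of $\Lambda_{k,r}$ but two facts specific to $G_a$. First, by (\ref{sa}) (a consequence of Lemma \ref{pre03}), $G_a\cap E$ meets at most the two neighboring cylinders $J_\sigma,J_\tau$, so no sum over $\Lambda_{k,r}$ and no smallness threshold on $\epsilon$ are needed. Second, and this is the step you are missing, the normalization (\ref{wlg1}) built into the definition of $\lambda_a$ — namely that $g_\sigma$ is chosen for the neighbor with $p_\sigma\geq p_\tau$ — combines with $\mathcal{E}(\tau)\geq\eta_r\mathcal{E}(\sigma)$ to give the one-sided bound
\[
\Bigl(\frac{c_\tau}{c_\sigma}\Bigr)^r\geq\eta_r\frac{p_\sigma}{p_\tau}\geq\eta_r,
\qquad\text{i.e.}\qquad c_\tau^{-1}c_\sigma\leq\eta_r^{-1/r},
\]
which is exactly what controls the rescaling factor when Lemma \ref{local} is applied to $\nu_\tau$, and simultaneously lets $p_\tau$ be absorbed by $p_\sigma$. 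With this replacement (and either your $\underline{p}^3p_\sigma$ lower bound for $\mu(G_a)$ or the paper's device of splitting $\frac{A+B}{C+D}\leq\frac{A}{C}+\frac{B}{D}$ with (\ref{s2})), your outline closes; without it, the case $S_a=2$ with a much thinner neighbor $J_\tau$ is not covered.
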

\begin{proof}
Let $\epsilon>0$ and $x\in\mathbb{R}^1$ be given.
We distinguish two cases.

Case 1: $S_a=2$. Let $\sigma,\tau$ be the same as in (\ref{wlg1}). By Lemma \ref{pre03}, for every $\rho\in\Lambda_{k,r}$ and $\omega\in\Psi_{|\sigma|,3}$, we have ${\rm card}(J_{\rho\ast\omega}\cap\alpha_n)\geq 1$. Thus
\begin{equation}\label{s2}
\mu(G_{a,\sigma})\geq (1-\overline{p}^3)p_\sigma,\;\mu(G_{a,\tau})\geq (1-\overline{p}^3)p_\tau.
\end{equation}
Using (\ref{s1}), (\ref{s2}) and the definition of $\lambda_{a}$, we deduce
\begin{eqnarray*}
\lambda_a(B(x,\epsilon))&=&\mu(\cdot|G_a)\circ g_\sigma(B(x,\epsilon))\\
&=&\frac{1}{\mu(G_a)}\mu(B(g_\sigma(x),c_\sigma\epsilon)\cap G_a)\\
&=&\frac{\mu(B(g_\sigma(x),c_\sigma\epsilon)\cap G_{a,\sigma})+\mu(B(g_\sigma(x),c_\sigma\epsilon)\cap G_{a,\tau})}{\mu(G_{a,\sigma})+\mu(G_{a,\tau})}
\\&\leq&\frac{\mu(B(g_\sigma(x),c_\sigma\epsilon)\cap G_{a,\sigma})}{\mu(G_{a,\sigma})}+\frac{\mu(B(g_\sigma(x),c_\sigma\epsilon)\cap G_{a,\tau})}{\mu(G_{a,\tau})}\\&\leq&\frac{\mu(B(g_\sigma(x),c_\sigma\epsilon)\cap J_\sigma))}{(1-\overline{p}^3)p_\sigma}+\frac{\mu(B(g_\sigma(x),c_\sigma\epsilon)\cap J_\tau))}{(1-\overline{p}^3)p_\tau}
\end{eqnarray*}
This, together with Lemma \ref{local}, yields
\begin{eqnarray}\label{s3}
\lambda_a(B(x,\epsilon))&\leq&\frac{\nu_\sigma\circ g_\sigma^{-1}(B(g_\sigma(x),c_\sigma\epsilon)}{1-\overline{p}^3}+\frac{\nu_\tau\circ g_\tau^{-1}(B(g_\sigma(x),c_\sigma\epsilon))}{1-\overline{p}^3}\nonumber\\&=&\frac{\nu_\sigma (B(x,\epsilon))}{1-\overline{p}^3}+\frac{\nu_\tau(B(g_\tau^{-1}\circ g_\sigma(x),c_\tau^{-1}c_\sigma\epsilon))}{1-\overline{p}^3}\nonumber\\
&\leq& C(1-\overline{p}^3)^{-1}\epsilon^t+C(1-\overline{p}^3)^{-1}(c_\tau^{-1}c_\sigma\epsilon)^t.
\end{eqnarray}
By the definition of $\Lambda_{k,r}$, we have $\mathcal{E}(\tau)\geq \eta_r\mathcal{E}(\sigma)$. By (\ref{wlg1}), we have $p_\sigma\geq p_\tau$. So,
\[
\bigg(\frac{c_\tau}{c_\sigma}\bigg)^r\geq\eta_r\frac{p_\sigma}{p_\tau}\geq\eta_r.
\]
It follows that $c_\tau^{-1}c_\sigma\leq\eta_r^{-1/r}$. Using this and (\ref{s3}), we obtain
\[
\lambda_a(B(x,\epsilon))\leq C(1-\overline{p}^3)^{-1}(1+\eta_r^{-t/r})\epsilon^t=:\chi_1\epsilon^t.
\]

Case 2: $S_a=1$. In this case, $P_a(\alpha_n)\cap E\subset J_\sigma$ for some $\sigma\in\Lambda_{k,r}$ and $|K_a|\leq 1$. Let $G_a$ be as defined in (\ref{ga2}). By Lemma \ref{pre03}, we have $\mu(G_a)\geq p_\sigma(1-2\overline{p}^3)$.
By the definition of $\lambda_{a}$ and Lemma \ref{local}, we deduce
\begin{eqnarray*}
\lambda_a(B(x,\epsilon))&=&\frac{1}{\mu(G_a)}\mu(B(g_\sigma(x),c_\sigma\epsilon)\cap G_a)\\&\leq&\frac{\mu(B(g_\sigma(x),c_\sigma\epsilon)\cap J_\sigma)}{\mu(G_a)}\\&\leq&\frac{\mu(B(g_\sigma(x),c_\sigma\epsilon)\cap J_\sigma))}{(1-2\overline{p}^3)p_\sigma}
=\frac{\nu_\sigma (B(x,\epsilon))}{1-2\overline{p}^3}\\&\leq& C(1-2\overline{p}^3)^{-1}\epsilon^t=:\chi_2\epsilon^t.
\end{eqnarray*}
The lemma follows by setting $C_1:=\max\{\chi_1,\chi_2\}$.
\end{proof}

Assume that $S_a=2$. Let $\sigma,\tau$ be the same as in (\ref{wlg1}). We note that $c_\tau^{-1}c_\sigma$ may be very near zero. This implies that the constant $C_1$ in Lemma \ref{pre05} does not reflect the local property of $\lambda_a(\cdot|g_\sigma^{-1}(G_{a,\tau}))$ in an accurate manner. In order to establish our lower estimate for $\underline{J}(\alpha_n,\mu)$ as claimed in Theorem \ref{mthm}, we need more detailed information on the local behavior of the measure $\lambda_a$. For this purpose, we will consider the performance of $\lambda_a$ on $g_\sigma^{-1}(G_{a,\sigma})$ and $g_\sigma^{-1}(G_{a,\tau})$ separately. Write
\begin{eqnarray}\label{ss4}
&K_{a,\sigma}:=K_a\cap g_\sigma^{-1}(G_{a,\sigma}),\;K_{a,\tau}:=K_a\cap g_\sigma^{-1}(G_{a,\tau});\label{zhu2}
\\ &\lambda_{a,\sigma}:=\lambda_a(\cdot|K_{a,\sigma}),\;\;\lambda_{a,\tau}:=\lambda_a(\cdot|K_{a,\tau}).\label{ss4}
\end{eqnarray}
\begin{lemma}\label{ss5}
Assume that $S_a=2$. Let $K_{a,\sigma},K_{a,\tau}$ be as defined in (\ref{ss4}). There exist constants $D_1$ and $D_2$ such that
\[
\lambda_a(K_{a,\sigma})\geq D_1,\;D_2\frac{c_\sigma^r}{c_\tau^r}\leq\lambda_a(K_{a,\tau})\leq D_2^{-1}\frac{c_\sigma^r}{c_\tau^r}.
\]
\end{lemma}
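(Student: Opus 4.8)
The plan is to estimate $\lambda_a(K_{a,\sigma})$ and $\lambda_a(K_{a,\tau})$ by relating both to $\mathcal{E}(\sigma)$ and $\mathcal{E}(\tau)$, since by $(\ref{wlg1})$ and the definition of $\Lambda_{k,r}$ these two quantities are comparable up to the factor $\eta_r$. First I would observe that $\lambda_a(K_{a,\sigma})=\mu(G_{a,\sigma})/\mu(G_a)$ and $\lambda_a(K_{a,\tau})=\mu(G_{a,\tau})/\mu(G_a)$, with $\mu(G_a)=\mu(G_{a,\sigma})+\mu(G_{a,\tau})$ by $(\ref{s1})$. So it suffices to bound the ratio $\mu(G_{a,\tau})/\mu(G_{a,\sigma})$ from above and below by constant multiples of $c_\sigma^r/c_\tau^r$; the claimed two-sided bounds on $\lambda_a(K_{a,\sigma})$ and $\lambda_a(K_{a,\tau})$ then follow by elementary algebra, using that $c_\sigma^r/c_\tau^r$ is itself bounded (via $\mathcal{E}(\sigma)\asymp\mathcal{E}(\tau)$ and $p_\sigma\geq p_\tau$, one gets $\eta_r\leq (c_\tau/c_\sigma)^r$ and $(c_\sigma/c_\tau)^r \le p_\tau/(p_\sigma \eta_r)\le \eta_r^{-1}$, hence $c_\sigma^r/c_\tau^r\in[\eta_r,\eta_r^{-1}]$).

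For the lower bound on $\mu(G_{a,\sigma})$ I would use $(\ref{s2})$: $\mu(G_{a,\sigma})\geq(1-\overline p^{\,3})p_\sigma$, which came from Lemma \ref{pre03} guaranteeing a point of $\alpha_n$ in each third-level subcylinder of $J_\sigma$, so that $G_{a,\sigma}$ misses at most the $\mu$-mass of three such subcylinders. The trivial upper bound is $\mu(G_{a,\sigma})\leq p_\sigma$, and similarly $(1-\overline p^{\,3})p_\tau\leq\mu(G_{a,\tau})\leq p_\tau$. Combining,
\[
\frac{\mu(G_{a,\tau})}{\mu(G_{a,\sigma})}\leq\frac{p_\tau}{(1-\overline p^{\,3})p_\sigma},\qquad \frac{\mu(G_{a,\tau})}{\mu(G_{a,\sigma})}\geq\frac{(1-\overline p^{\,3})p_\tau}{p_\sigma}.
\]
Now I would convert $p_\tau/p_\sigma$ to $c_\sigma^r/c_\tau^r$: since $\mathcal{E}(\tau)\asymp\mathcal{E}(\sigma)$ with the explicit constants $\eta_r\mathcal{E}(\sigma)\leq\mathcal{E}(\tau)\leq\eta_r^{-1}\mathcal{E}(\sigma)$ from $(\ref{lambdakrcompare})$, we have $p_\tau c_\tau^r\asymp p_\sigma c_\sigma^r$, i.e. $p_\tau/p_\sigma\asymp c_\sigma^r/c_\tau^r$ with constants $\eta_r$ and $\eta_r^{-1}$. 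Substituting yields $\mu(G_{a,\tau})/\mu(G_{a,\sigma})\asymp c_\sigma^r/c_\tau^r$ with explicit constants involving $\eta_r$ and $1-\overline p^{\,3}$.

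Finally I would assemble the conclusion: writing $\rho:=\mu(G_{a,\tau})/\mu(G_{a,\sigma})$, we get $\lambda_a(K_{a,\sigma})=1/(1+\rho)\geq 1/(1+\eta_r^{-1}(1-\overline p^{\,3})^{-1})=:D_1$ since $\rho$ is bounded above, and $\lambda_a(K_{a,\tau})=\rho/(1+\rho)$, which is squeezed between constant multiples of $\rho$ (hence of $c_\sigma^r/c_\tau^r$) because $1+\rho$ is bounded between $1$ and a constant. Choosing $D_2$ small enough to absorb all the constants $\eta_r, \overline p$ appearing in these bounds gives the stated inequalities. The only mildly delicate point is bookkeeping the direction of each inequality when passing between $p_\tau/p_\sigma$ and $c_\sigma^r/c_\tau^r$ and when inverting $1+\rho$; there is no real obstacle, since all the geometric input (nonvanishing of subcylinder masses, comparability within $\Lambda_{k,r}$) is already in hand from Lemmas \ref{pre03} and the definition of $\Lambda_{k,r}$.
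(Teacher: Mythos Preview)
Your argument is essentially the same as the paper's: both use (\ref{s2}) to sandwich $\mu(G_{a,\sigma})$ and $\mu(G_{a,\tau})$ between $(1-\overline{p}^{\,3})p_\rho$ and $p_\rho$ for $\rho\in\{\sigma,\tau\}$, then convert $p_\tau/p_\sigma$ into $c_\sigma^r/c_\tau^r$ via (\ref{lambdakrcompare}), and do the algebra with $\lambda_a(K_{a,\sigma})=\mu(G_{a,\sigma})/\mu(G_a)$ and $\lambda_a(K_{a,\tau})=\mu(G_{a,\tau})/\mu(G_a)$.

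One correction: your parenthetical claim that $c_\sigma^r/c_\tau^r\in[\eta_r,\eta_r^{-1}]$ is not right---only the upper bound holds, and indeed the paper remarks just before Lemma~\ref{pre08} that $c_\sigma c_\tau^{-1}$ may be arbitrarily close to zero. Fortunately your proof never uses the lower bound: for the $D_1$ inequality you only need $\rho$ bounded \emph{above}, and for the two-sided $D_2$ estimate you only need $1\le 1+\rho\le\text{const}$, which again requires just the upper bound on $\rho$. The paper avoids this detour entirely for $D_1$, using $p_\tau/p_\sigma\le 1$ directly to get the cleaner constant $D_1=(1-\overline{p}^{\,3})/(2-\overline{p}^{\,3})$.
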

\begin{proof}
By (\ref{s2}), we have
\begin{eqnarray*}
\frac{(1-\overline{p}^3)p_\sigma}{p_\tau}=\frac{(1-\overline{p}^3)\mu(J_\sigma)}{\mu(J_\tau)}\leq\frac{\mu(G_{a,\sigma})}{\mu(G_{a,\tau})}
\leq\frac{\mu(J_\sigma)}{(1-\overline{p}^3)\mu(J_\tau)}=\frac{p_\sigma}{(1-\overline{p}^3)p_\tau}.
\end{eqnarray*}
Note that, by (\ref{wlg1}), we have $p_\sigma\geq p_\tau$. It follows that
\begin{eqnarray*}
\lambda_a(K_{a,\sigma})=\frac{\mu(G_{a,\sigma})}{\mu(G_{a,\sigma})+\mu(G_{a,\tau})}\geq\frac{1-\overline{p}^3}{2-\overline{p}^3}.\label{g7}
\end{eqnarray*}
So it suffices to set $D_1:=(1-\overline{p}^3)(2-\overline{p}^3)^{-1}$. Similarly, we have
\begin{eqnarray}
\frac{(1-\overline{p}^3)p_\tau}{2p_\sigma}\leq\frac{(1-\overline{p}^3)p_\tau}{(2-\overline{p}^3)p_\sigma}\leq\lambda_a(K_{a,\tau})
=\frac{\mu(G_{a,\tau})}{\mu(G_{a,\sigma})+\mu(G_{a,\tau})}
\leq\frac{p_\tau}{(1-\overline{p}^3)p_\sigma}.\label{g8}
\end{eqnarray}
By the definition of $\Lambda_{k,r}$, we have $\eta_r\mathcal{E}(J_\tau)\leq\mathcal{E}(J_\sigma)\leq\eta_r^{-1}\mathcal{E}(J_\tau)$. Hence,
\begin{eqnarray*}
\frac{\eta_rp_\tau}{p_\sigma}\leq\frac{c_\sigma^r}{c_\tau^r}
\leq\frac{\eta_r^{-1}p_\tau}{p_\sigma}.
\end{eqnarray*}
This, together with (\ref{g8}), implies that
\[
\eta_r(1-\overline{p}^3)\lambda_a(K_{a,\tau})\leq\frac{c_\sigma^r}{c_\tau^r}
\leq2\eta_r^{-1}(1-\overline{p}^3)^{-1}\lambda_a(K_{a,\tau}).
\]
The remaining part of the lemma follows by setting $D_2:=2^{-1}\eta_r(1-\overline{p}^3)$.
\end{proof}
\begin{remark}\label{remcompare}
By Lemma \ref{ss5}, we see that $\mu(G_{a,\sigma})\asymp \mu(G_a)$. It follows that $\mu(G_a)c_\sigma^r\asymp\mathcal{E}(\sigma)$. This justifies the choice $g_\sigma$ in the definition of $\lambda_a=\mu(\cdot|G_a)\circ g_\sigma$.
\end{remark}
\begin{lemma}\label{pre08}
Assume that $S_a=2$. Let $t$ be the same as in Lemma \ref{local} and let $\lambda_{a,\sigma},\lambda_{a,\tau}$ be as defined in (\ref{ss4}). There exists a constant $C_2$ such that
\[
\sup_{x\in\mathbb{R}^1}\lambda_{a,\sigma}(B(x,\epsilon))\leq C_2\epsilon^t\;{\rm and}\;\sup_{x\in\mathbb{R}^1}\lambda_{a,\tau}(B(x,\epsilon))\leq C_2(c_\sigma c_\tau^{-1})^t\epsilon^t.
\]
\end{lemma}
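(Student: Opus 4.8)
The plan is to exploit the fact that the normalizing constant $\mu(G_a)$ entering the definition of $\lambda_a$ cancels as soon as one passes to the conditional measures $\lambda_{a,\sigma}$ and $\lambda_{a,\tau}$. Concretely, I would first establish the identities
\[
\lambda_{a,\sigma}=\mu(\cdot|G_{a,\sigma})\circ g_\sigma,\qquad \lambda_{a,\tau}=\mu(\cdot|G_{a,\tau})\circ g_\sigma,
\]
with $g_\sigma$ the \emph{same} similitude used in (\ref{z1}) for both measures. This follows from (\ref{z1}), (\ref{zhu2}) and (\ref{ss4}): since $K_a={\rm supp}(\lambda_a)$ carries full $\lambda_a$-mass and $G_{a,\sigma},G_{a,\tau}\subset G_a$, for every Borel set $A$ one has $\lambda_a(A\cap K_{a,\sigma})=\mu\bigl(g_\sigma(A)\cap G_{a,\sigma}\bigr)/\mu(G_a)$ and $\lambda_a(K_{a,\sigma})=\mu(G_{a,\sigma})/\mu(G_a)$, so the quotient defining $\lambda_{a,\sigma}(A)$ reduces to $\mu\bigl(g_\sigma(A)\mid G_{a,\sigma}\bigr)$, and likewise for $\lambda_{a,\tau}$.

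Granting this, the bound for $\lambda_{a,\sigma}$ is immediate: using $g_\sigma(B(x,\epsilon))=B(g_\sigma(x),c_\sigma\epsilon)$, the inclusion $G_{a,\sigma}\subset J_\sigma$, the lower bound $\mu(G_{a,\sigma})\ge(1-\overline{p}^3)p_\sigma$ from (\ref{s2}), the relation $\mu(\cdot|J_\sigma)=\nu_\sigma\circ g_\sigma^{-1}$ from (\ref{nusigma}), and Lemma \ref{local},
\[
\lambda_{a,\sigma}(B(x,\epsilon))=\frac{\mu\bigl(B(g_\sigma(x),c_\sigma\epsilon)\cap G_{a,\sigma}\bigr)}{\mu(G_{a,\sigma})}\le\frac{\nu_\sigma(B(x,\epsilon))}{1-\overline{p}^3}\le\frac{C}{1-\overline{p}^3}\,\epsilon^t .
\]
For $\lambda_{a,\tau}$ I would run exactly the same chain, but now with the cylinder $J_\tau$, so that $\mu(\cdot|J_\tau)=\nu_\tau\circ g_\tau^{-1}$ enters; since $g_\tau^{-1}$ is a similitude of ratio $c_\tau^{-1}$, it carries $B(g_\sigma(x),c_\sigma\epsilon)$ to a ball of radius $c_\tau^{-1}c_\sigma\epsilon$, and Lemma \ref{local} yields
\[
\lambda_{a,\tau}(B(x,\epsilon))=\frac{\mu\bigl(B(g_\sigma(x),c_\sigma\epsilon)\cap G_{a,\tau}\bigr)}{\mu(G_{a,\tau})}\le\frac{\nu_\tau\bigl(B(g_\tau^{-1}g_\sigma(x),c_\tau^{-1}c_\sigma\epsilon)\bigr)}{1-\overline{p}^3}\le\frac{C}{1-\overline{p}^3}\,(c_\sigma c_\tau^{-1})^t\epsilon^t .
\]
Setting $C_2:=C(1-\overline{p}^3)^{-1}$ then gives both inequalities (for every $x\in\mathbb{R}^1$ and $\epsilon>0$, since Lemma \ref{local} is uniform in $x$), which proves the lemma.

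There is no real obstacle; the single point requiring care is the bookkeeping in the change of variables. Because \emph{both} $\lambda_{a,\sigma}$ and $\lambda_{a,\tau}$ are defined through the one similitude $g_\sigma$, the measure $\lambda_{a,\tau}$ is only seen through $\nu_\tau$ after composing with $g_\tau^{-1}\circ g_\sigma$, whose ratio $c_\tau^{-1}c_\sigma$ is precisely what produces the factor $(c_\sigma c_\tau^{-1})^t$. This is the quantitative refinement of Lemma \ref{pre05} that is needed later: $c_\sigma/c_\tau$ may be arbitrarily close to $0$, so the per-set bound with $(c_\sigma c_\tau^{-1})^t$ is genuinely sharper than the uniform constant $C_1$ of Lemma \ref{pre05}, and it is exactly this sharpness that will make the subsequent application of Lemma \ref{pre07} to the lower estimate of $\underline{J}(\alpha_n,\mu)$ succeed.
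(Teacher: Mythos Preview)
Your proof is correct and follows essentially the same route as the paper: both reduce to the identities $\lambda_{a,\sigma}=\mu(\cdot|G_{a,\sigma})\circ g_\sigma$ and $\lambda_{a,\tau}=\mu(\cdot|G_{a,\tau})\circ g_\sigma$, bound the numerator by passing to $\nu_\sigma$ respectively $\nu_\tau$ via (\ref{nusigma}), bound the denominator via (\ref{s2}), and invoke Lemma~\ref{local}. The only cosmetic difference is that the paper packages the composition $g_\tau^{-1}\circ g_\sigma$ as an auxiliary similitude $h_{\sigma,\tau}$ of ratio $c_\sigma c_\tau^{-1}$ and introduces the rescaled measure $\widetilde{\lambda}_{a,\tau}:=\lambda_{a,\tau}\circ h_{\sigma,\tau}^{-1}=\mu(\cdot|G_{a,\tau})\circ g_\tau$, proving first $\widetilde{\lambda}_{a,\tau}(B(x,\epsilon))\le C_2\epsilon^t$ and then pulling back; this yields the same bound but has the advantage that $\widetilde{\lambda}_{a,\tau}$ is reused verbatim in the proof of Lemma~\ref{pre06}.
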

\begin{proof}
Let $x\in\mathbb{R}^1$ and $\epsilon>0$. Note that $\mu(G_a)\lambda_a(K_{a,\sigma})=\mu(G_{a,\sigma})$. We have
\begin{eqnarray*}
\lambda_{a,\sigma}(B(x,\epsilon))&=&\frac{\lambda_a((B(x,\epsilon)\cap K_{a,\sigma}))}{\lambda_a(K_{a,\sigma})}
\\&=&\frac{\mu(g_\sigma((B(x,\epsilon)\cap K_{a,\sigma})\cap G_a)}{\mu(G_a)\lambda_a(K_{a,\sigma})}\\&\leq&\frac{\mu((B(g_\sigma(x),c_\sigma\epsilon)\cap J_\sigma)}{\mu(G_{a,\sigma})}\\&=&\frac{p_\sigma}{\mu(G_{a,\sigma})}\nu_\sigma(B(x,\epsilon)).
\end{eqnarray*}
Thus, by Lemma \ref{local} and (\ref{s2}), we obtain
\[
\lambda_{a,\sigma}(B(x,\epsilon))\leq(1-\overline{p}^3)^{-1} C\epsilon^t.
\]
Set $C_2:=C(1-\overline{p}^3)^{-1}$. Next, we show that the second inequality in the lemma is fulfilled. Let $h_{\sigma,\tau}$ be an arbitrary similitude on $\mathbb{R}^1$ with similarity ratio $c_\sigma c_\tau^{-1}$. we define $\widetilde{\lambda}_{a,\tau}:=\lambda_{a,\tau}\circ h_{\sigma,\tau}^{-1}$. Then $g_\tau:=g_\sigma\circ h_{\sigma,\tau}^{-1}$ is a similitude of similarity ratio $c_\tau$. We have $\widetilde{\lambda}_{a,\tau}=\mu(\cdot|G_{a,\tau})\circ g_\tau$. In fact,
\begin{eqnarray*}
\widetilde{\lambda}_{a,\tau}(B(x,\epsilon))&=&\frac{\lambda_a((B(h_{\sigma,\tau}^{-1}(x),c_\sigma^{-1} c_\tau\epsilon)\cap K_{a,\tau}))}{\lambda_a(K_{a,\tau})}
\\&=&\frac{\mu((B(g_\sigma\circ h_{\sigma,\tau}^{-1}(x),c_\tau\epsilon)\cap G_{a,\tau})}{\mu(G_{a,\tau})}\\&\leq&\frac{\mu((B(g_\tau(x),c_\tau\epsilon)\cap J_\tau)}{\mu(G_{a,\tau})}\\&=&\frac{p_\tau}{\mu(G_{a,\tau})}\nu_\tau(B(x,\epsilon)).
\end{eqnarray*}
By Lemma \ref{local} and (\ref{s2}), we have, $\widetilde{\lambda}_{a,\tau}(B(x,\epsilon))\leq C_2\epsilon^t$. Thus,
\begin{eqnarray*}
\lambda_{a,\tau}(B(x,\epsilon))=\widetilde{\lambda}_{a,\tau}(B( h_{\sigma,\tau}(x),c_\sigma c_\tau^{-1}\epsilon))\leq C_2(c_\sigma c_\tau^{-1}\epsilon)^t.
\end{eqnarray*}
This completes the proof of the lemma.
\end{proof}

Our next lemma gives a lower estimate for $e_{h,r}^r(\lambda_a)-e_{h+1,r}(\lambda_a)$. This estimate will be useful for the application of Lemma 4.1. In \cite[Lemma 2.3]{Zhu:17}, we obtained such a lower estimate by assuming that $|{\rm supp}(\nu)|\leq 1$ for the considered measure $\nu$. However,  as $k\to\infty$, $\sup_{\sigma,\tau\in\Lambda_{k,r}}c_\sigma^{-1}c_\tau$ may not be bounded from above. This means that, in the case that $S_a=2$, $|K_a|$ may be very large, although it is always finite. Note that $|K_{a,\sigma}|\leq 1$ and by Lemma \ref{ss5}, we have $\lambda_a(K_{a,\sigma})\geq  D_1$. This makes the technique in Lemma 2.3 of \cite{Zhu:17} applicable. We have

\begin{lemma}\label{pre06a}
Assume that $S_a=2$ and let $\lambda_a$ be as defined in (\ref{z1}).
Then for each $h\geq 1$, there exists a number $\zeta_{h,r}>0$ depending on $h,t$ and $C_1$, such that
\[
e_{h,r}^r(\lambda_a)-e_{h+1,r}^r(\lambda_a)\geq\zeta_{h,r}.
\]
\end{lemma}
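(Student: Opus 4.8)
The plan is to adapt the argument from \cite[Lemma 2.3]{Zhu:17} (see also Lemma \ref{zhu1}), with the key modification that, even though $|K_a|$ may be large when $S_a=2$, the piece $K_{a,\sigma}$ has diameter at most $1$ and carries a definite fraction of the mass by Lemma \ref{ss5}. First I would argue by contradiction: suppose no such $\zeta_{h,r}>0$ exists. Then there is a sequence of admissible measures $\lambda_a^{(m)}$ (or, more precisely, a sequence of configurations giving rise to measures satisfying the hypotheses of Lemma \ref{pre05}) with $e_{h,r}^r(\lambda_a^{(m)})-e_{h+1,r}^r(\lambda_a^{(m)})\to 0$. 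For each $m$, pick $\gamma_m\in C_{h,r}(\lambda_a^{(m)})$. The point is that the difference $e_{h,r}^r-e_{h+1,r}^r$ being small forces each cell of a Voronoi partition of $\gamma_m$ to contribute little, and in particular, adding one well-chosen extra point inside the "heaviest'' region barely helps — which can only happen if that region is already very finely resolved, contradicting ${\rm card}(\gamma_m)=h$ being fixed.

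More concretely, the steps I would carry out are as follows. Step 1: Restrict attention to the part of the space lying in $g_\sigma^{-1}(G_{a,\sigma})$, i.e. work with $\lambda_{a,\sigma}=\lambda_a(\cdot|K_{a,\sigma})$; by Lemma \ref{ss5} we have $\lambda_a(K_{a,\sigma})\ge D_1$, and $|K_{a,\sigma}|\le 1$. Step 2: For $\gamma\in C_{h,r}(\lambda_a)$, consider a Voronoi partition $\{P_b(\gamma)\}_{b\in\gamma}$; at least one cell $P_{b_0}(\gamma)$ satisfies $\lambda_a(P_{b_0}(\gamma)\cap K_{a,\sigma})\ge D_1/h$. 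Step 3: Using the non-concentration estimate $\sup_x\lambda_a(B(x,\epsilon))\le C_1\epsilon^t$ from Lemma \ref{pre05}, this cell cannot be too small — more precisely, there is a ball $B(z,\rho_0)$ with $\rho_0$ depending only on $D_1,h,C_1,t$ such that $B(z,\rho_0)\cap P_{b_0}(\gamma)\cap K_{a,\sigma}$ still carries mass bounded below (one splits $P_{b_0}(\gamma)\cap K_{a,\sigma}$ into two halves by a hyperplane and picks the "outer'' half, using $|K_{a,\sigma}|\le1$ to control distances, exactly as in the proof of \cite[Proposition 12.12]{GL:00}). Step 4: Add to $\gamma$ one new point placed in this sub-ball; this strictly reduces the integral by a definite amount $\zeta_{h,r}>0$, since every $x$ in that sub-ball had $d(x,\gamma)^r$ bounded below (the distance from such $x$ to $\gamma$ is at least some fixed multiple of $\rho_0$) and now has $d(x,\gamma\cup\{{\rm new}\})=0$ on a set of mass $\gtrsim D_1/h$. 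Hence $e_{h,r}^r(\lambda_a)-e_{h+1,r}^r(\lambda_a)\ge\zeta_{h,r}$, with $\zeta_{h,r}$ independent of $a$ and of $n$.

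The main obstacle I anticipate is making Step 3 quantitative and, crucially, uniform: I must extract a lower bound on the radius $\rho_0$ and on the residual mass that depends only on $h$, $t$, $C_1$ and $D_1$ — not on the particular $\sigma,\tau$ or on $k$ — because the same $\zeta_{h,r}$ is later summed over $\Theta(n)$ indices $a$. The mass bound $D_1$ from Lemma \ref{ss5} and the diameter bound $|K_{a,\sigma}|\le1$ are precisely what make this possible, so the proof must be careful to phrase everything in terms of the sub-measure $\lambda_{a,\sigma}$ (whose support has diameter $\le1$) rather than $\lambda_a$ itself. Once the geometric lemma underlying \cite[Proposition 12.12]{GL:00} is invoked in this restricted setting, the rest is the routine comparison argument sketched in Step 4, essentially identical to \cite[Lemma 2.3]{Zhu:17}.
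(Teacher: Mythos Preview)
Your proposal is essentially the paper's approach: both rely on the two facts $|K_{a,\sigma}|\le 1$ and $\lambda_a(K_{a,\sigma})\ge D_1$ (Lemma~\ref{ss5}) together with the non-concentration bound of Lemma~\ref{pre05} to locate a region in $K_{a,\sigma}$ that is at a definite distance from any $h$-optimal set and carries a definite amount of mass, then add a point there. The paper's execution is slightly more direct than your Steps~2--4: instead of first isolating a single Voronoi cell of mass $\ge D_1/h$, it removes the whole union $\bigcup_{b\in\beta_h}B(b,\xi_{h,2})$ at once (choosing $\xi_{h,2}=(2C_1h)^{-1/t}D_1^{1/t}$ so this union has $\lambda_a$-mass $\le D_1/2$), covers the leftover part of $K_{a,\sigma}$ by finitely many balls of a strictly smaller radius $\delta_h$, and adds the center $z_0$ of the heaviest one. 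This makes the distance lower bound $d(x,\beta_h)\ge\xi_{h,1}$ on $B(z_0,\delta_h)$ automatic and yields $e_{h,r}^r-e_{h+1,r}^r\ge(2l_h)^{-1}D_1(\xi_{h,1}^r-\delta_h^r)$ directly.

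Two small points to fix in your sketch: in Step~4, adding a single point does \emph{not} make $d(x,\gamma\cup\{z_0\})=0$ on a set of positive mass---it makes it $\le\delta_h$, so you need two scales $\delta_h<\xi_{h,1}$ as above to get a positive gain; and the ``hyperplane splitting'' you attribute to \cite[Proposition~12.12]{GL:00} is not the mechanism there or here---the relevant device is simply ``remove balls around the $h$ points, then cover the remainder''. The contradiction framing at the outset is also unnecessary, since the argument is constructive.
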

\begin{proof}
Let $h\geq 1$ and $\beta_h\in C_{h,r}(\lambda_a)$. We set
\[
\xi_{h,1}:=(4C_1h)^{-\frac{1}{t}}D_1^{\frac{1}{t}},\;\xi_{h,2}:=(2C_1h)^{-\frac{1}{t}}D_1^{\frac{1}{t}};\;
\delta_h:=\frac{1}{2}\min\{\xi_{h,1},\xi_{h,2}-\xi_{h,1}\}.
\]
Then by Lemma \ref{pre05}, we have $\sum_{b\in\beta}\lambda_a(B(b,\xi_{h,2}))\leq\frac{D_1}{2}$. Note that $\lambda_a(K_{a,\sigma})\geq D_1$ by Lemma \ref{ss5}. It follows that
\begin{eqnarray}\label{g6}
\lambda_a\bigg(K_{a,\sigma}\setminus\bigcup_{b\in\beta_h}B(b,\xi_{h,2})\bigg)\geq D_1-\frac{D_1}{2}=\frac{D_1}{2}.
\end{eqnarray}
Since $|K_{a,\sigma}|\leq1$, we may find an integer $l_h$ which depends on $C_1,t$ and $h$ such that $K_{a,\sigma}\setminus\bigcup_{b\in\beta_h}B(b,\xi_{h,2})$ may be covered by $l_h$ closed balls of radii $\delta_h$ which are centered in $K_{a,\sigma}\setminus\bigcup_{b\in\beta_h}B(b,\xi_{h,2})$. By (\ref{g6}), there exists such a closed ball $B(z_0,\delta_h)$ with $\lambda_a(B(z_0,\delta_h)\cap K_{a,\sigma})\geq (2l_h)^{-1}D_1$. We set $\gamma:=\beta\cup\{z_0\}$. Then
\begin{eqnarray}\label{s4}
e^r_{h,r}(\lambda_a)-e^r_{h+1,r}(\lambda_a)&\geq& I(\beta_h,\mu)-I(\gamma,\mu)\nonumber\\
&\geq&\int_{B(z_0,\delta_h)}d(x,\beta_h)^r-d(x,\gamma)^rd\lambda_a(x)\nonumber\\
&\geq&\lambda_a(B(z_0,\delta_h))(\xi_{h,1}^r-\delta_h^r)\nonumber\\
&\geq&(2l_h)^{-1}D_1\xi_{h,1}^r(1-2^{-r})=:\zeta_{h,r}.
\end{eqnarray}
This completes the proof of the lemma.
\end{proof}
\begin{remark}\label{rem02}
In the cases when $S_a=1$, we have $|K_a|\leq 1$. By using Lemma \ref{pre05}, one can easily see that Lemma \ref{pre06a} holds. See
also \cite[Lemma 2.3]{Zhu:17} for details.
\end{remark}

With the help of Lemmas \ref{pre05}-\ref{pre06a}, we are now able to apply Lemma \ref{pre07} to the measures $\lambda_a$ for $a\in\alpha_n$.
First we consider the case that $S_a=2$. Let $\sigma,\tau$ be the same as in (\ref{wlg1}). We write
\[
\beta_a:=g_\sigma^{-1}(\alpha_n\cap G_a),\;H_a:={\rm card}(\beta_a).
\]
Then by Lemma \ref{pre02} and \ref{pre04}, we have that $2M_1-2\leq H_a\leq 2M_3$. Moreover,
\begin{eqnarray}\label{tem02}
M_1-1\leq{\rm card}(\beta_a\cap g_\sigma^{-1}(G_{a,\sigma})),\;{\rm card}(\beta_a\cap g_\sigma^{-1}(G_{a,\tau}))\leq M_3.
\end{eqnarray}
We denote by $\{P_{b}(\beta_a)\}_{b\in\beta_a}$ a Voronoi partition with respect to $\beta_a$. Then we have
\begin{lemma}\label{pre06}
Assume that $S_a=2$ and let $\lambda_a$ be as defined in (\ref{z1}).
Then there exists a number $d_{H_a}$ depending on $H_a$ and $C_2$ such that
\[
\min_{b\in\beta_a}\int_{P_{b}(\beta_a)}d(x,b)^rd\lambda_a(x)\geq d_{H_a}.
\]
\end{lemma}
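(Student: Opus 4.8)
The plan is to establish the lower bound for each cell integral $\int_{P_b(\beta_a)} d(x,b)^r\, d\lambda_a(x)$ by splitting $\beta_a$ according to whether a given point lies in $g_\sigma^{-1}(G_{a,\sigma})$ or $g_\sigma^{-1}(G_{a,\tau})$, and then applying Lemma \ref{pre07} to the appropriate rescaled restriction of $\lambda_a$. Fix $b\in\beta_a$ and set $B:=P_b(\beta_a)$. The first observation is that, since $H_a\le 2M_3$ is bounded while $\lambda_a$ is (by Lemma \ref{pre05}) uniformly non-atomic, the cell $B$ must carry a definite amount of $\lambda_a$-mass: more precisely, I would argue that either $\lambda_a(B\cap K_{a,\sigma})$ or $\lambda_a(B\cap K_{a,\tau})$ is bounded below by a positive constant depending only on $H_a$, $C_1$ and $D_1$, $D_2$. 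This uses that $\lambda_a(K_{a,\sigma})\ge D_1$ together with the fact that the $H_a$ Voronoi cells partition $K_a$; some cell must then grab a $\ge D_1/H_a$ fraction of $K_{a,\sigma}$, but to handle an arbitrary fixed cell $B$ one needs the sharper combinatorial-geometric input below.

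The key step is the following dichotomy. For the fixed cell $B=P_b(\beta_a)$, consider the two pieces $B_\sigma:=B\cap K_{a,\sigma}$ and $B_\tau:=B\cap K_{a,\tau}$. If $\lambda_a(B_\sigma)\ge \kappa$ for a suitable constant $\kappa=\kappa(H_a)$, then applying Lemma \ref{pre07} with the measure $\lambda_{a,\sigma}$ (which satisfies $\sup_x\lambda_{a,\sigma}(B(x,\epsilon))\le C_2\epsilon^t$ by Lemma \ref{pre08}) and with $\widetilde C=C_2$, $s=t$, gives
\[
\int_{B_\sigma} d(x,b)^r\, d\lambda_a(x)
= \lambda_a(K_{a,\sigma})\int_{B_\sigma} d(x,b)^r\, d\lambda_{a,\sigma}(x)
\ge D_1\cdot 2^{-(1+r/t)}C_2^{-r/t}\,\lambda_{a,\sigma}(B_\sigma)^{1+r/t},
\]
and $\lambda_{a,\sigma}(B_\sigma)=\lambda_a(B_\sigma)/\lambda_a(K_{a,\sigma})\ge \kappa$ (using $\lambda_a(K_{a,\sigma})\le 1$), so the whole expression is bounded below by a constant depending only on $H_a$, $C_2$, $t$, $r$, $D_1$. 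Symmetrically, if instead $\lambda_a(B_\tau)\ge \kappa$, I would use $\lambda_{a,\tau}$: here Lemma \ref{pre08} gives $\sup_x\lambda_{a,\tau}(B(x,\epsilon))\le C_2(c_\sigma c_\tau^{-1})^t\epsilon^t$, so Lemma \ref{pre07} with $\widetilde C=C_2(c_\sigma c_\tau^{-1})^t$ produces a factor $(c_\sigma c_\tau^{-1})^{-r}$, which is then exactly absorbed by the bound $\lambda_a(K_{a,\tau})\ge D_2\, c_\sigma^r c_\tau^{-r}$ from Lemma \ref{ss5}, leaving a constant free of $\sigma,\tau$. In both branches one arrives at a bound depending only on $H_a$ (and the global constants), which I would name $d_{H_a}$.

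The main obstacle is establishing the dichotomy itself, i.e.\ that \emph{every} cell $P_b(\beta_a)$ — not merely some cell — captures a uniformly positive share of either $K_{a,\sigma}$ or $K_{a,\tau}$. The clean way to see this is via Remark \ref{zhu3}: $\alpha_n\cap G_a=g_\sigma(\beta_a)$ is a $T_a$-optimal set for $\mu(\cdot|G_a)$, equivalently $\beta_a\in C_{H_a,r}(\lambda_a)$. For an optimal set one has the standard fact (the ``no empty cell'' phenomenon for non-atomic measures, cf.\ \cite[Theorem 4.1, Proposition 12.12]{GL:00} and the argument of Lemma \ref{pre06a}) that $\lambda_a(P_b(\beta_a))$ is bounded below: if some cell had too little mass one could delete its centre, redistribute, and reduce the error, contradicting $\beta_a\in C_{H_a,r}(\lambda_a)$ — here I would quantify this using the non-atomicity bound from Lemma \ref{pre05} exactly as in the proof of Lemma \ref{pre06a}, obtaining $\lambda_a(P_b(\beta_a))\ge c(H_a)>0$ for all $b$. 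Since $\lambda_a(P_b(\beta_a))=\lambda_a(B_\sigma)+\lambda_a(B_\tau)$ by \eqref{ss4} and disjointness, at least one of the two summands is $\ge c(H_a)/2=:\kappa$, which is precisely the input the dichotomy needs. Assembling: for every $b\in\beta_a$, $\int_{P_b(\beta_a)}d(x,b)^r\,d\lambda_a(x)\ge d_{H_a}$ with $d_{H_a}$ depending only on $H_a$ and $C_2$ (through $\kappa=\kappa(H_a,C_1)$ and the constants of Lemmas \ref{ss5}, \ref{pre08}), which is the assertion.
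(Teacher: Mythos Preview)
Your overall strategy---split each Voronoi cell into its $K_{a,\sigma}$ and $K_{a,\tau}$ parts and apply Lemma~\ref{pre07} to the appropriate conditional measure---is exactly the paper's. However, your dichotomy is set up incorrectly, and this is a genuine gap, not a cosmetic one.

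You claim that $\lambda_a(P_b(\beta_a))\ge c(H_a)>0$ for \emph{every} $b\in\beta_a$, arguing by ``delete the centre and compare with $e_{H_a-1,r}^r(\lambda_a)$''. That deletion argument only converts the gap $\zeta_{H_a-1,r}\le e_{H_a-1,r}^r(\lambda_a)-e_{H_a,r}^r(\lambda_a)$ into a mass bound once you control $d(x,\gamma)$ on $P_b(\beta_a)$, where $\gamma=\beta_a\setminus\{b\}$. On $K_{a,\sigma}$ one has $d(x,\gamma)\le 1$, but on $K_{a,\tau}$ the best available bound is $d(x,\gamma)\le c_\sigma^{-1}c_\tau$, and the paper explicitly warns (just before Lemma~\ref{pre06a}) that $c_\sigma^{-1}c_\tau$ is \emph{not} bounded above. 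In fact your uniform mass bound is simply false: by \eqref{tem02} there are at least $M_1-1\ge 3$ points of $\beta_a$ in $g_\sigma^{-1}(G_{a,\tau})$, so on $\mathbb{R}^1$ some $b$ has $P_b(\beta_a)\cap K_a\subset K_{a,\tau}$, whence $\lambda_a(P_b(\beta_a))\le\lambda_a(K_{a,\tau})\le D_2^{-1}c_\sigma^r c_\tau^{-r}$ by Lemma~\ref{ss5}, and this can be arbitrarily small. For such $b$ both halves of your dichotomy fail: $\lambda_a(B_\sigma)=0$ and $\lambda_a(B_\tau)<\kappa$.

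The fix, which is what the paper does, is to run the dichotomy on the \emph{integrals}
\[
A_{b,1}:=\int_{P_b(\beta_a)\cap K_{a,\sigma}}d(x,\gamma)^r\,d\lambda_a,\qquad
A_{b,2}:=\int_{P_b(\beta_a)\cap K_{a,\tau}}d(x,\gamma)^r\,d\lambda_a,
\]
rather than on the masses. One always has $A_{b,1}+A_{b,2}\ge \zeta_{H_a-1,r}$. If $A_{b,1}$ dominates, then $d(x,\gamma)\le 1$ on $K_{a,\sigma}$ gives $\lambda_a(B_\sigma)\ge \tfrac12\zeta_{H_a-1,r}$ and your $\sigma$-branch goes through unchanged. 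If $A_{b,2}$ dominates, then $d(x,\gamma)\le c_\sigma^{-1}c_\tau$ on $K_{a,\tau}$ gives only $\lambda_a(B_\tau)\ge \tfrac12(c_\sigma c_\tau^{-1})^r\zeta_{H_a-1,r}$; this is \emph{not} uniformly positive, but after dividing by $\lambda_a(K_{a,\tau})\le D_2^{-1}c_\sigma^r c_\tau^{-r}$ one gets $\lambda_{a,\tau}(B_\tau)\ge \tfrac12 D_2\zeta_{H_a-1,r}$, which is uniform, and then Lemma~\ref{pre07} applied to $\widetilde{\lambda}_{a,\tau}$ finishes as you outlined. The scale factor $(c_\sigma c_\tau^{-1})^r$ that you hoped to absorb via $\lambda_a(K_{a,\tau})$ does cancel---but only after it has already appeared in the mass lower bound, not before.
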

\begin{proof}
For convenience, we simply write $H$ for $H_a$. Note that $\alpha_n\in C_{n,r}(\mu)$. By \cite[Theorem 4.1]{GL:00}, $\alpha_n\cap G_a$ is an $H$-optimal set for $\mu(\cdot|G_a)$. Hence, by the similarity of $g_\sigma$ and \cite[Lemma 3.2]{GL:00}, we have $\beta_a\in C_{H,r}(\lambda_a)$. Write
\[
P_{b,1}(\beta_a):=P_b(\beta_a)\cap K_{a,\sigma},\;\;P_{b,2}(\beta_a):=P_b(\beta_a)\cap K_{a,\tau},\;b\in\beta_a.
\]
Fix an arbitrary $b\in\beta_a$, we set $\gamma:=\beta_a\setminus\{b\}$.  Then, by (\ref{tem02}), we have
\[
{\rm card}(\gamma\cap g_\sigma^{-1}(G_{a,\sigma}))\geq M_1-2,\;{\rm card}(\gamma\cap g_\sigma^{-1}(G_{a,\tau}))\geq M_1-2\geq 2.
\]
Note that $|K_{a,\sigma}|\leq 1$ and $|K_{a,\tau}|\leq c_\sigma^{-1}c_\tau$. Hence,
\begin{equation}\label{ss1}
d(x,\gamma)\leq 1,\;{\rm for}\;\;x\in P_{b,1}(\beta_a);\;\;d(x,\gamma)\leq c_\sigma^{-1}c_\tau;\;{\rm for}\;\;x\in P_{b,2}(\beta_a).
\end{equation}
Next, we distinguish two cases according to the following two quantities:
\begin{eqnarray*}
A_{b,1}(\gamma,\lambda_a):=\int_{P_{b,1}(\beta_a)}d(x,\gamma)^rd\lambda_a(x),\\
A_{b,2}(\gamma,\lambda_a):=\int_{P_{b,2}(\beta_a)}d(x,\gamma)^rd\lambda_a(x).
\end{eqnarray*}

Case (1): $A_{b,1}(\gamma,\lambda_a)\geq A_{b,2}(\gamma,\lambda_a)$. In this case, we have
\begin{eqnarray}\label{s4}
e_{H-1,r}^r(\lambda_a)-e_{H,r}^r(\lambda_a)&\leq& I(\gamma,\lambda_a)-I(\beta_a,\lambda_a)\nonumber\\&=&
\int_{P_b(\beta_a)}d(x,\gamma)^rd\lambda_a(x)-\int_{P_b(\beta_a)}d(x,\beta_a)^rd\lambda_a(x)\nonumber\\&\leq& \int_{P_b(\beta_a)}d(x,\gamma)^rd\lambda_a(x)\leq 2A_{b,1}(\gamma,\lambda_a).
\end{eqnarray}
Using this, Lemma \ref{pre06a} and (\ref{ss1}), we deduce
\begin{eqnarray*}
\zeta_{H-1,r}\leq e_{H-1,r}^r(\lambda_a)-e_{H,r}^r(\lambda_a)\leq2A_{b,1}(\gamma,\lambda_a)\leq 2\lambda_a(P_{b,1}(\beta_a)).
\end{eqnarray*}
It follows that $\lambda_a(P_{b,1}(\beta_a))\geq 2^{-1}\zeta_{H,r}$. Note that $\lambda_a(K_{a,\sigma})\geq D_1$.
\begin{eqnarray*}
I_b(\beta_a,\lambda_a)&=&\int_{P_b(\beta_a)}d(x,b)^rd\lambda_a(x)\\
&\geq&\int_{P_{b,1}(\beta_a)}d(x,b)^rd\lambda_a(x)\\
&=&\lambda_a(K_{a,\sigma})\int_{P_{b,1}(\beta_a)}d(x,b)^rd\lambda_a(\cdot|K_{a,\sigma})(x)\\
&\geq&D_1\int_{P_{b,1}(\beta_a)}d(x,b)^rd\lambda_{a,\sigma}(x).
\end{eqnarray*}
Thus, by Lemmas \ref{pre08} and \ref{pre07}, we obtain
\begin{eqnarray}\label{ss2}
I_b(\beta_a,\lambda_a)\geq D_12^{-2(1+\frac{r}{t})}C_2^{-\frac{r}{t}}\zeta_{H-1,r}^{-(1+\frac{r}{t})}.
\end{eqnarray}

Case (2): $A_{b,1}(\gamma,\lambda_a)<A_{b,2}(\gamma,\lambda_a)$. In this case, by (\ref{ss1}), we have
\begin{eqnarray}\label{s4}
\zeta_{H-1,r}\leq e_{H-1,r}^r(\lambda_a)-e_{H,r}^r(\lambda_a)\leq 2(c_\sigma^{-1} c_\tau)^r\lambda_a(P_{b,2}(\beta_a)).
\end{eqnarray}
This and Lemma \ref{ss5} lead to
\[
\lambda_a(P_{b,2}(\beta_a))\geq2^{-1}(c_\sigma c_\tau^{-1})^r\zeta_{H-1,r}\geq2^{-1}D_2\lambda_a(K_{a,\tau})\zeta_{H-1,r}.
\]
It follows that
\begin{eqnarray*}
\lambda_{a,\tau}(P_{b,2}(\beta_a))=\lambda_a(P_{b,2}(\beta_a)|K_{a,\tau})\geq2^{-1}D_2\zeta_{H-1,r}.
\end{eqnarray*}
Let $\widetilde{\lambda}_{a,\tau}$ be as defined in Lemma \ref{pre08}. Then we have
\begin{eqnarray*}
\sup_{x\in\mathbb{R}^1}\widetilde{\lambda}_{a,\tau}(B(x,\epsilon))\leq C_2\epsilon^t;\;\widetilde{\lambda}_{a,\tau}(h_{\sigma,\tau}(P_{b,2}(\beta_a)))\geq2^{-1}D_2\zeta_{H-1,r}.
\end{eqnarray*}
Thus, by Lemma \ref{pre07}, we deduce
\begin{eqnarray*}
&&\int_{h_{\sigma,\tau}(P_{b,2}(\beta_a))}d(x,h_{\sigma,\tau}(b))^rd\widetilde{\lambda}_{a,\tau}(x)\\&&\geq 2^{-(1+\frac{r}{t})}C_2^{\frac{-r}{t}} \widetilde{\lambda}_{a,\tau}(h_{\sigma,\tau}(P_{b,2}(\beta_a)))^{1+\frac{r}{t}}\\&&
\geq2^{-2(1+\frac{r}{t})}C_2^{\frac{-r}{t}}D_2^{1+\frac{r}{t}}\zeta_{H-1,r}^{1+\frac{r}{t}}.
\end{eqnarray*}
This, together with Lemma \ref{ss5} and the similarity of $h_{\sigma,\tau}$, yields
\begin{eqnarray}\label{ss3}
I_b(\beta_a,\lambda_a)&\geq&\int_{P_{b,2}(\beta_a)}d(x,b)^rd\lambda_a(x)\nonumber
\\&=&\lambda_a(K_{a,\tau})\int_{P_{b,1}(\beta_a)}d(x,b)^rd\widetilde{\lambda}_{a,\tau}\circ h_{\sigma,\tau}(x)\nonumber\\
&=&\lambda_a(K_{a,\tau})(c_\sigma^{-1} c_\tau)^r\int_{h_{\sigma,\tau}(P_{b,1}(\beta_a))}d(x,h(b))^rd\widetilde{\lambda}_{a,\tau}(x)
\nonumber\\&\geq&2^{-2(1+\frac{r}{t})}C_2^{\frac{-r}{t}}D_2^{2+\frac{r}{t}}\zeta_{H-1,r}^{1+\frac{r}{t}}.
\end{eqnarray}
The lemma follows by combining (\ref{ss2}) and (\ref{ss3}).
\end{proof}

\begin{remark}\label{rem03}
In case that $S_a=1$, let $G_a$ be as defined in (\ref{ga2}). Then we have $|K_a|\leq 1$. Write
$\beta_a:=g_\sigma^{-1}(\alpha_n\cap G_a)$ and $H:=H_a:={\rm card}(\beta_a)$.
By Lemma \ref{pre05} and Remark \ref{rem02}, one can see that, as a degenerate case, Lemma \ref{pre06} remains true for some number $\widetilde{d}_H$ depending on $t, C_1$ and $H$. One may see \cite[Lemma 2.4]{Zhu:17} for more details. We still denote by $d_H$, the minimum of $d_H$ and $\widetilde{d}_H$. Then Lemma \ref{pre06} holds true for both cases $S_a=2$ and $S_a=1$.
\end{remark}

 For two $\mathbb{R}$-valued variables $X,Y$, we write $X\lesssim Y$ ($X\gtrsim Y$) if there exists some constant $D$ such that $X\leq DY$ ($X\geq DY$).

\emph{Proof of Theorem \ref{mthm}}

Note that $|{\rm supp}(\mu)|\leq 1$. By Lemma \ref{local} and Remarks \ref{rem02}, \ref{rem03}, it suffices to give the proof for $n\geq(M_2+2)\phi_{1,r}$. Let $k$ be as chosen in (\ref{n}) and $\alpha_n\in C_{n,r}(\mu)$. Fix an arbitrary $a\in\alpha_n$, we have
$S_a\leq 2$. By Lemma \ref{pre03}, for every $\sigma\in\Lambda_{k,r}$ and $\omega\in\Psi_{|\sigma|,3}$, we have $\alpha_n\cap J_{\sigma\ast\omega}\neq\emptyset$. This implies that for every $x\in J_\sigma$, we have $d(x,\alpha_n)\leq c_\sigma \overline{c}^3$. Hence,
\begin{eqnarray}\label{z7}
I_a(\mu,\alpha_n)\leq \left\{ \begin{array}{ll}
p_\sigma (c_\sigma \overline{c}^3)^r=\overline{c}^3\mathcal{E}(\sigma)\asymp\eta_r^k&\mbox{if}\;\;S_a=1\\
p_\sigma (c_\sigma \overline{c}^3)^r+p_\tau (c_\tau \overline{c}^3)^r\asymp\eta_r^k&\mbox{if}\;\;S_a=2
\end{array}\right..
\end{eqnarray}

Next we show the reverse estimate. Fix an $a\in\alpha_n$. Let $\beta_a,H_a$ and $\lambda_a$ be as defined above (according to the value of $S_a$).
By Lemmas \ref{pre02} and \ref{pre04}, we have
\begin{eqnarray}\label{z6}
2M_1-2\leq H_a\leq 2M_3.
\end{eqnarray}
By \cite[Theorem 4.1]{GL:00}, $\alpha_n\cap G_a\in C_{T_a,r}(\mu(\cdot|G_a))$. So by the similarity of $g_\sigma$ and Lemma 3.2 of \cite{GL:00}, we know that $\beta_a\in C_{H_a,r}(\lambda_a)$ and $P_{g_\sigma^{-1}(b)}(\beta_a)=g_\sigma^{-1}(P_{b}(\alpha_n))$ for every $b\in \alpha_n\cap G_a$.  Note that $\mu(G_a)\asymp p_\sigma$, in both cases $S_a=1$ and $S_a=2$. Thus, by (\ref{z6}) and Lemmas \ref{pre05}, \ref{pre06} and Remark \ref{rem03}, we deduce
\begin{eqnarray}\label{z2}
I_a(\alpha_n,\mu)&=&\mu(G_a)\int_{P_a(\alpha_n)}d(x,a)^rd\lambda_a\circ g_\sigma^{-1}(x)
\nonumber\\&=&\mu(G_a)c_\sigma^r\int_{g_\sigma^{-1}(P_a(\alpha_n))}d(x,g_\sigma^{-1}(a))^rd\lambda_a(x)\nonumber\\&\gtrsim&\mathcal{E}(\sigma)\min_{1\leq h\leq 2M_3}\zeta_{h,r}\asymp\mathcal{E}(\sigma)\asymp \eta_r^k.
\end{eqnarray}
By \cite[Theorem 4.12]{GL:00}, we have ${\rm card}(\alpha_n)=n$. Thus, by (\ref{z7}) and (\ref{z2}), we obtain
$e^r_{n,r}(\mu)=I(\alpha_n,\mu)\asymp n\eta_r^k$. It follows that $\underline{J}(\alpha_n,\mu),\underline{J}(\alpha_n,\mu)\asymp\frac{1}{n}e^r_{n,r}(\mu)$.

Next, we show the estimate for $\Delta_{n,r}(\mu):=e^r_{n,r}(\mu)-e^r_{n+1,r}(\mu)$.
Fix an arbitrary $\sigma\in\Lambda_{k,r}$. Let $\xi_2(\sigma),\zeta_2(\sigma)$ be as defined in (\ref{tem03}) and (\ref{tem04}). We set
\begin{eqnarray}\label{sg3}
\gamma_\sigma:=\alpha_n\cap[\xi_2(\sigma),\zeta_2(\sigma)],\;T_\sigma:={\rm card}(\gamma_\sigma).
\end{eqnarray}
By Lemmas \ref{pre02}, \ref{pre04}, $2\leq M_1-2\leq T_\sigma\leq M_3$. Choose an arbitrary $a\in\gamma_\sigma$. We have
\begin{eqnarray}\label{sg1}
S_a=1,\;G_a=\bigcup_{b\in\gamma_\sigma}P_b(\alpha_n);\;\mu(G_a)\asymp p_\sigma.
\end{eqnarray}
Let $\Gamma_a\in C_{T_\sigma+1,r}(\mu(\cdot|G_a))$. We define
$\beta:=(\alpha_n\setminus\gamma_\sigma)\cup\Gamma_a$. Then
\begin{eqnarray}\label{sg2}
{\rm card}(\beta)\leq n+1;\;{\rm and}\;d(x,\beta)\leq d(x,\alpha_n)\;\;{\rm for\;all}\;x\in E\setminus G_a.
\end{eqnarray}
Let $g_\sigma$ be a similitude on $\mathbb{R}^1$ of similarity ratio $c_\sigma$ and $\lambda_a:=\mu(\cdot|G_a)\circ g_\sigma^{-1}$ as before. Using (\ref{sg1}), (\ref{sg2}), Remark \ref{remgl} and Lemma \ref{pre06a}, we deduce
\begin{eqnarray}\label{z4}
 \Delta_{n,r}(\mu)&\geq&\int_{G_a}d(x,\alpha_n)^rd\mu(x)-\int_{G_a}d(x,\beta)^rd\mu(x)\nonumber\\
 &\geq&\int_{G_a}d(x,\gamma_\sigma)^rd\mu(x)-\int_{G_a}d(x,\Gamma_a)^rd\mu(x)\nonumber\\&\gtrsim&
 \mathcal{E}(\sigma)(e^r_{T_\sigma,r}(\lambda_a)-e^r_{T_\sigma+1,r}(\lambda_a))\nonumber
\\&\geq&\mathcal{E}(\sigma)\min_{1\leq h\leq M_3}\zeta_{h,r}\gtrsim\frac{1}{n}e^r_{n,r}(\mu).
\end{eqnarray}

Now let $k$ be the integer such that $(M_2+2)\phi_{k,r}\leq n+1<(M_2+2)\phi_{k+1,r}$ and $\alpha_{n+1}\in C_{n+1,r}(\mu)$. Let $\sigma$ be an arbitrary word in $\Lambda_{k,r}$ and let $G_a,\gamma_\sigma$ be as defined in (\ref{sg3}) and (\ref{sg1}) by replacing $\alpha_n$ with $\alpha_{n+1}$. We choose an arbitrary $b\in\gamma_\sigma$ and set $\beta:=\alpha_{n+1}\setminus\{b\}$. For every $x\in E\setminus G_a$, we have $d(x,\beta)=d(x,\alpha_n)$. Note that ${\rm card}(\beta\cap J_\sigma)\geq M_1-1>2$. We have, $d(x,\beta)\leq c_\sigma$ for $x\in G_a$. It follows that
\begin{eqnarray}\label{z5}
 \Delta_{n,r}(\mu)&\leq&\int_{G_a}d(x,\beta)^rd\mu(x)-\int_{G_a}d(x,\alpha_{n+1})^rd\mu(x)\nonumber\\
 &\leq&\int_{G_a}d(x,\beta)^rd\mu(x)\leq \mu(G_a)c_\sigma^r \lesssim\mathcal{E}(\sigma)\lesssim\frac{1}{n}e^r_{n,r}(\mu).
\end{eqnarray}
Hence, the proof of the theorem is complete by combining (\ref{z4}) and ((\ref{z5})).

\end{document}